\newtheorem{thm}{Theorem}[section]
\newtheorem{rmk}{Remark}
\newtheorem*{theorem-non}{Conjecture}
\newtheorem{clm}{Claim}
\title{On a Conjecture for Dynamic Priority Queues and Nash Equilibrium  for Quality of Service Sensitive Markets}
\author{Manu K. Gupta$^1$\footnote{Emails: Manu K. Gupta (manu-kumar.gupta@irit.fr) and N. Hemachandra (nh@iitb.ac.in)} and N. Hemachandra$^2$\\ $^1$IRIT, 2 rue C. Camichel, Toulouse, France \\$^2$Industrial Engineering and Operations Research, IIT Bombay, India}
\begin{document}
\maketitle 

\begin{abstract}
Many economic transactions, including those of online markets, have a time lag between the start and end times of transactions. Customers need to wait for completion of their transaction (order fulfillment) and hence are also interested in their waiting time as a Quality of Service (QoS) attribute. So, they factor this QoS in the demand they offer to the firm (service-provider) and some customers  (user-set) would be willing to pay for shorter waiting times. On the other hand, such waiting times depend on the demand user-set offers to the service-provider. We model the above economic-QoS strategic interaction between service-provider and user-set under a fairly generic scheduling framework as a non-cooperative constrained game.

We use an existing joint pricing and scheduling model. An optimal solution to this joint pricing and scheduling problem was guaranteed by a finite step algorithm subject to a conjecture. We first settle this conjecture based on queuing and optimization arguments and discuss its implications on the above game. We show that a continuum of Nash equilibria (NE) exists and it can be computed easily using constrained best response dynamics. Revenue maximal NE is identified by above finite step algorithm. We illustrate how both players can benefit at such revenue maximal NE by identifying suitable operational decisions, i.e., by choosing an appropriate game along the theme of pricing and revenue management. 

\end{abstract}

\textbf{Keywords:}{ Pricing, revenue management, Nash equilibrium, best response dynamics, constraint game, priority queues,  QoS, finite step algorithm,  service level agreements, convex optimization. }

\section{Introduction} 
{Services and their Quality of Service (QoS) attribute form a major portion of web based Internet economy. The  offered services involve downloading/installation of an application software (app) and those where customers need to wait for some amount of time for their request/transaction to be completed by the service-provider (firm). We focus in a scenario where customers are willing to wait and apart from the price of the service, such customers are also interested in their waiting time as a QoS attribute. Such economic activities have other interesting features; the prominent being the finite resource capacity of the service-provider, the key reason why customers need to wait. Some other consequences include pricing of such services, scheduling of these finite capacity resources, etc. Our theme is the behavior of equilibria in markets where customers are sensitive to the price and service levels they receive which is determined by suitably defined QoS measures.

Above strategic and economic interaction arises in many fundamental resource allocation problems. We mention below two motivating examples. First one from the High Performance Computing (HPC) facility and another one in an online retail sector. In such scenarios, customers' requests form queues during their interactions with the service-provider.

The HPC center at the National Renewable Energy Laboratory (NREL) provides high-speed and large-scale computer processing facility. The NREL HPC system is one of the largest HPC systems in the world dedicated to advancing renewable energy and energy efficiency technologies \cite{Users}. The computing power of this high-end technology being scarce, jobs are queued up and will be completed eventually. Users are charged certain price for using this facility. However, users can reduce the queue waiting time by paying more to HPC facility. Jobs are given (non-preemptive strict) priority if they pay twice the normal rate \cite{Queues}. 

Another motivating example is the online retail stores for fast moving consumer goods, electronic gadgets, durables, etc., deliver goods within some pre-specified (standard) delivery time. Customers in many cases can pay a premium charge in return for faster delivery time; we call this class of customers secondary class of customers while the primary class of customers are those who choose to wait for longer time in return for no additional payments -- primary class because they usually form a large portion of the overall customer base. In such situations, the retail stores have to carefully manage their finite resources like warehouses, logistics, etc. Note that such stores while quoting the extra charge as well as the faster delivery time need to make sure that the goods are delivered to the primary customers within the standard delivery time. Separate Service Level Agreements (SLAs) between the online store and two classes of customers formalize such quoted (faster) delivery times and (extra) prices. Differential pricing and scheduling policies of queues play crucial roles in such interactions. Also, mean waiting time for each class of customers is a suitable QoS measure here. }

{{We study a model of above strategic interaction in the presence of congestion.} We consider HPC or the online retail seller as service-provider or firm. We consider the setup wherein the firm is already offering service to some existing (primary class of) customers, with a pre-specified price, arrival rate and service level as part of the service level agreement, $SLA_p$ it has with this set of customers. The firm has some spare resource capacity after serving the primary class of customers and is considering to offer this spare resource to another (secondary) class of customers, which we call user-set. It intends to maximize its revenue rate from the secondary class of customers.  We assume that the primary customers offer a stable demand and they don't switch to secondary class; this assumption is reasonable at an aggregate level as there is always a certain portion of customers who are willing to wait rather than pay price for faster delivery, i.e., we assume that there is a market segmentation. From the user-set point of view two important aspects are the price of the service and the waiting time incurred. In fact, some customers will be willing to pay more for this service to have lower mean waiting times. For example Amazon offers to Amazon Prime members with the default free two-day delivery -- it offers a small incentive (e.g., 1 digital credit) to a Prime member to opt for the standard one-week delivery instead. The user-set intends to maximize the rate of customers that the firm is able to offer this particular service. } 

Specifically, we consider the framework in which the mean demand the user-set offers to the service-provider is negatively linear in the (extra) price and the mean waiting time of a typical customer -- the linear demand setup capturing the sensitivity of the user-set to the price and offered QoS level. The service-provider, for a given demand distribution, has to decide on the price of the service as well as the mean waiting time it can guarantee to the user-set constrained by the $SLA_p$ it already had with the primary class of customers.  Note that the mean waiting time of customers that the service-provider can guarantee depends on the mean demand that the user-set offers to it, which in turn depends on the mean waiting time they are guaranteed; hence this interaction is strategic and the natural question would be the existence of equilibrium. Further, the existing $SLA_p$ with the primary class of customers leads to a constrained strategic interaction as illustrated in Figure \ref{new-fig}. All notations of this figure are elaborated in Section \ref{model_des}.\vspace{-0.2cm}
\begin{figure}[h]\centering 
\resizebox{0.45 \textwidth}{!}{\ifx\du\undefined
  \newlength{\du}
\fi
\setlength{\du}{15\unitlength}
\begin{tikzpicture}[scale = 0.85]
\pgftransformxscale{1.000000}
\pgftransformyscale{-1.000000}
\definecolor{dialinecolor}{rgb}{0.000000, 0.000000, 0.000000}
\pgfsetstrokecolor{dialinecolor}
\definecolor{dialinecolor}{rgb}{1.000000, 1.000000, 1.000000}
\pgfsetfillcolor{dialinecolor}
\pgfsetlinewidth{0.100000\du}
\pgfsetdash{}{0pt}
\pgfsetdash{}{0pt}
\pgfsetbuttcap
\pgfsetmiterjoin
\pgfsetlinewidth{0.100000\du}
\pgfsetbuttcap
\pgfsetmiterjoin
\pgfsetdash{}{0pt}
\definecolor{dialinecolor}{rgb}{1.000000, 1.000000, 1.000000}
\pgfsetfillcolor{dialinecolor}
\pgfpathmoveto{\pgfpoint{2.428571\du}{-15.120290\du}}
\pgfpathlineto{\pgfpoint{11.321429\du}{-15.120290\du}}
\pgfpathlineto{\pgfpoint{13.100000\du}{-10.070290\du}}
\pgfpathlineto{\pgfpoint{11.321429\du}{-5.020290\du}}
\pgfpathlineto{\pgfpoint{2.428571\du}{-5.020290\du}}
\pgfpathlineto{\pgfpoint{0.650000\du}{-10.070290\du}}
\pgfpathlineto{\pgfpoint{2.428571\du}{-15.120290\du}}
\node[anchor=west] at (3.25000\du,-14.0000\du){Service-Provider};
\node[anchor=west] at (4.25000\du,-13.0000\du){as a Queue};
\node[anchor=west] at (3.750000\du,-11.0000\du){(Pricing and};
\node[anchor=west] at (2.750000\du,-10.2500\du){ operational decisions)};
\node[anchor=west] at (1.6000\du,-9.0000\du){Price $\theta$, Scheduling policy $\beta$ and};
\node[anchor=west] at (1.50\du,-8.25000\du){Mean waiting time bound $S_s$};
\node[anchor=west] at (4.7500\du,-7.5000\du){based on };
\node[anchor=west] at (2.750000\du,-6.5000\du){demand with mean $\lambda_s$};
\pgfusepath{fill}
\definecolor{dialinecolor}{rgb}{0.000000, 0.000000, 0.000000}
\pgfsetstrokecolor{dialinecolor}
\pgfpathmoveto{\pgfpoint{2.428571\du}{-15.120290\du}}
\pgfpathlineto{\pgfpoint{11.321429\du}{-15.120290\du}}
\pgfpathlineto{\pgfpoint{13.100000\du}{-10.070290\du}}
\pgfpathlineto{\pgfpoint{11.321429\du}{-5.020290\du}}
\pgfpathlineto{\pgfpoint{2.428571\du}{-5.020290\du}}
\pgfpathlineto{\pgfpoint{0.650000\du}{-10.070290\du}}
\pgfpathlineto{\pgfpoint{2.428571\du}{-15.120290\du}}
\pgfusepath{stroke}
\definecolor{dialinecolor}{rgb}{0.000000, 0.000000, 0.000000}
\pgfsetstrokecolor{dialinecolor}
\node[anchor=west] at (12.000\du,-17.00\du){{\large Demand with mean $\lambda_s$}};
\node[anchor=west] at (12.000\du,-3.500\du){{\large Price $\theta$ and Quality of Service $S_s$}};
\node at (6.875000\du,-9.870290\du){};
\pgfsetlinewidth{0.100000\du}
\pgfsetdash{}{0pt}
\pgfsetdash{}{0pt}
\pgfsetbuttcap
\pgfsetmiterjoin
\pgfsetlinewidth{0.100000\du}
\pgfsetbuttcap
\pgfsetmiterjoin
\pgfsetdash{}{0pt}
\definecolor{dialinecolor}{rgb}{1.000000, 1.000000, 1.000000}
\pgfsetfillcolor{dialinecolor}
\pgfpathmoveto{\pgfpoint{21.364286\du}{-13.000000\du}}
\pgfpathlineto{\pgfpoint{27.435714\du}{-13.000000\du}}
\pgfpathlineto{\pgfpoint{28.650000\du}{-10.025000\du}}
\pgfpathlineto{\pgfpoint{27.435714\du}{-7.050000\du}}
\pgfpathlineto{\pgfpoint{21.364286\du}{-7.050000\du}}
\pgfpathlineto{\pgfpoint{20.150000\du}{-10.025000\du}}
\pgfpathlineto{\pgfpoint{21.364286\du}{-13.000000\du}}
\node[anchor=west] at (21.500\du,-12.00\du){User-set offers};
\node[anchor=west] at (21.500\du,-11.2500\du){demand with mean};
\node[anchor=west] at (21.5000\du,-10.2500\du){$\lambda_s \leq a -b\theta -cS_s$};
\node[anchor=west] at (21.5000\du,-9.2500\du){($b$ and $c$ are };
\node[anchor=west] at (21.000\du,-8.2500\du){demand elasticities)};
\pgfusepath{fill}
\definecolor{dialinecolor}{rgb}{0.000000, 0.000000, 0.000000}
\pgfsetstrokecolor{dialinecolor}
\pgfpathmoveto{\pgfpoint{21.364286\du}{-13.000000\du}}
\pgfpathlineto{\pgfpoint{27.435714\du}{-13.000000\du}}
\pgfpathlineto{\pgfpoint{28.650000\du}{-10.025000\du}}
\pgfpathlineto{\pgfpoint{27.435714\du}{-7.050000\du}}
\pgfpathlineto{\pgfpoint{21.364286\du}{-7.050000\du}}
\pgfpathlineto{\pgfpoint{20.150000\du}{-10.025000\du}}
\pgfpathlineto{\pgfpoint{21.364286\du}{-13.000000\du}}
\pgfusepath{stroke}
\definecolor{dialinecolor}{rgb}{0.000000, 0.000000, 0.000000}
\pgfsetstrokecolor{dialinecolor}
\node at (24.400000\du,-9.825000\du){};
\pgfsetlinewidth{0.100000\du}
\pgfsetdash{}{0pt}
\pgfsetdash{}{0pt}
\pgfsetbuttcap
{
\definecolor{dialinecolor}{rgb}{0.000000, 0.000000, 0.000000}
\pgfsetfillcolor{dialinecolor}
\pgfsetarrowsend{to}
\definecolor{dialinecolor}{rgb}{0.000000, 0.000000, 0.000000}
\pgfsetstrokecolor{dialinecolor}
\pgfpathmoveto{\pgfpoint{21.364384\du}{-12.999914\du}}
\pgfpatharc{316}{249}{9.265651\du and 9.265651\du}
\pgfusepath{stroke}
}
\pgfsetlinewidth{0.100000\du}
\pgfsetdash{}{0pt}
\pgfsetdash{}{0pt}
\pgfsetbuttcap
{
\definecolor{dialinecolor}{rgb}{0.000000, 0.000000, 0.000000}
\pgfsetfillcolor{dialinecolor}
\pgfsetarrowsend{to}
\definecolor{dialinecolor}{rgb}{0.000000, 0.000000, 0.000000}
\pgfsetstrokecolor{dialinecolor}
\pgfpathmoveto{\pgfpoint{11.320652\du}{-5.020520\du}}
\pgfpatharc{107}{51}{10.923280\du and 10.923280\du}
\pgfusepath{stroke}
}
\end{tikzpicture}}\vspace{-0.4cm}
 \caption{Schematic view of service-provider (as a queue) and user-set (customers) interaction.} 
 \label{new-fig}
\vspace{-0.3cm}
\end{figure}
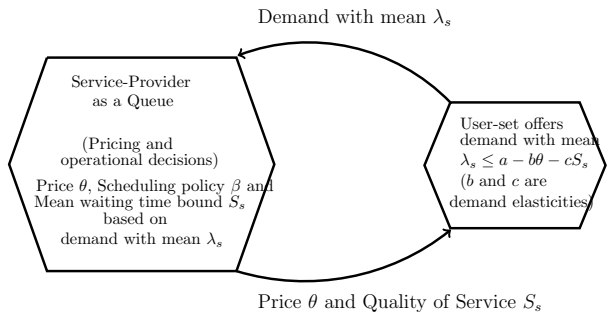

We model the above interaction under a fairly generic scheduling scheme, i.e., under the set of all non-preemptive,  non-anticipative and work conserving policies. We exploit the \textit{completeness} feature of delay dependent priority (DDP) scheme, introduced by Kleinrock \cite{Kleinrock1964}, to pose the problem in an existing framework of a joint pricing and scheduling model introduced in  \cite{Sudhir_standard_style}. \textit{Completeness} of a dynamic priority implies that it achieves all possible achievable vectors of mean waiting time (obtained by any non pre-emptive, non anticipative and work conserving scheduling scheme) for a suitable value of the priority parameter. See \cite{complete} and \cite{coffman} for detailed discussion on completeness and \cite{federgruen}, \cite{manu_vtool} for the completeness feature of different dynamic priority scheduling schemes including delay dependent priority discipline.

 An optimal solution to this joint pricing and scheduling problem was guaranteed by a finite step algorithm subject to a conjecture (see \cite{Sudhir_standard_style}). We first settle this conjecture based on queuing and optimization arguments and discuss its implications on a non-standard game for above strategic interaction.

{We show that a continuum of Nash equilibria (NE) exists and it can be computed easily using constrained best response dynamics. The constrained best response dynamics indeed finds a NE in one step for a given game. Further, revenue maximal NE in this continuum of NE is identified by above finite step algorithm.} We illustrate with numerical instances that both players can benefit at such revenue maximal NE by identifying suitable operational decisions, i.e., by choosing an appropriate game.  Several interesting game theoretic issues are under investigation such as impact of non-linear market in strategic interaction, intervention or mechanism design for the market so that the revenue maximal NE is achieved. We now briefly present some important and related work in this field.   
\subsection{Related literature}
In literature, Naor \cite{Naor} was first to study the issues of pricing and congestion control in an observable $M/M/1/FCFS$ queue. A huge literature has evolved since then. A comprehensive survey on topics related to queue pricing can be seen in \cite{hassin16} and \cite{hassin2003queue}. A similar model where a resource is shared by two different classes of customers is studied in \cite{hall2009static}. This model assumes that the demand is sensitive to unit price alone and focuses on dynamic pricing policies which depend on the production system (queue) status. A related model of optimal pricing with pre-emptive scheduling has been recently analyzed in \cite{gupta2017optimal}. On the other hand, the study of equilibrium behaviour of customers and servers in queueing system has long history and researchers have developed a broad literature in this field by now (see \cite{burnetas2013customer, doncel2014resource, hassin16, hassin2002nash, hassin2003queue, haviv2009strategic, nguyen2013constraint} and references there in).{ In above literature, often customers (or customer classes) are considered as player whereas we consider market as a player by aggregating all customers into a market segment.}

{ The delay dependent priority queue has been recently called as accumulating priority queue and analysis is advanced to obtain the distribution of waiting time (see \cite{li2017nonlinear},  \cite{stanford2014waiting}). Also related to accumulating priority queue, the issues of pricing has been dealt with in \cite{haviv2016strategic} and scheduling in \cite{li2016multi}. Pricing problem in \cite{haviv2016strategic} is formulated as a non-cooperative game in which
customers purchase priority coefficients with the goal of reducing waiting costs in exchange. It is argued that if waiting costs are linear then the Nash equilibrium is in pure strategies. Authors in \cite{li2016multi} deal with the scheduling of multi-class  multi-server queue where a job is scheduled with one of the idle server using $r$-dispatch policy introduced in \cite{doroudi2011dispatching}. Clearly, these settings are different from this paper. 

\section{System Description}\label{model_des}
 The model addresses the question of pricing the surplus server capacity of a stable $M/G/1$ queue for a new class of customers that is sensitive to its QoS (mean waiting time) and price. The primary class of customers arrive according to Poisson arrival process with rate $\lambda_p$. $S_p$, the desired guarantee (or bound) on the mean waiting time of the primary class of customers, indicates the service level offered. The service time of customers is independent and identically distributed with mean $1/\mu$ and variance $\sigma^2$, irrespective of customer class. A schematic view of the model is shown in Figure \ref{fig:sks}. The objective is to determine the promised {{}bound} on the mean waiting time of a secondary class of customers, $S_s$ and their unit admission price $\theta$ so as to maximize the revenue generated by the system, while constrained by primary class service levels.  The secondary class of customers arrive according to an independent Poisson arrival process with rate $\lambda_s$, which is linearly dependent on price $\theta$ and QoS $S_s$:  $\lambda_s(\theta,S_s)=a - b\theta - cS_s$, where $a,~b,~c$ are positive constants driven by the market; $a$ is the maximal arrival rate offered by market, $b$ and $c$ are the elasticity of unit price and {{}assured} service level respectively.

{Before describing the model, we motivate two important assumptions of our system setup. First, that of linear demand function. We assume linear relationship for simplicity of analysis and it has been previously used in queuing literature (see, for example \cite{demand1}, \cite{demand2} and comprehensive references for the subsequent work in \cite{hassin16}). Linear production functions are also often used in economics literature \cite{baijmol1972economic}. Our another assumption is related to the fact that queue operator cannot optimize the primary class and customers cannot switch between classes.  
We list below few instances where such model is relevant.

One such model is where primary customers are inhouse customers
of a firm and hence their arrival rate cannot be controlled (optimized). Suppose the firm observes that some of its capacity and investment is underutilized and that there is a market for its capacity
and capability. Thus, the firm can `lease its facilities' to secondary customers without affecting the service level of existing/primary (inhouse) customers. Primary customers are inhouse customers of service provider and they sign a long term contract of receiving service at pre-specified service level. These customers are not allowed to break the contract and participate in new service level offered and price charged by service provider. Hence switches between classes is not possible.

 We explain the above behavior in the context of mobile network operators. In United States, the major network operators as of now are AT\&T, Sprint, T-Mobile and
Verizon (see \cite{operators}). These are the big four and they offer their own mobile phone plans and contracts
and have a primary customer base. There are virtual operators who lease the service from these big
four and have customers of their own. For example, Boost Mobile has customers of its own, but it uses the Sprint network. In such situations, above joint pricing and scheduling model gives a guideline
to price virtual operator's customers. Other firms could be a container depot, a large manufacturing plant, etc. where long term contracts are feasible and above pricing model is relevant. In above examples, customers can't change their class. Such assumptions have also been previously used in literature (see \cite{gupta2017optimal}, \cite{Sudhir_standard_style} and references therein). However, example of the models where customers are allowed to change classes are DiffServ model (see \cite{tandra2004diffserv}).

}

Let $\mathcal{F}$ be the set of all non pre-emptive, non anticipative and work conserving scheduling schemes. The mean waiting time of primary and secondary class customers depend on the scheduling scheme $\pi \in \mathcal{F}$. Let $W_{p}(\lambda_{s}, \pi)$ and $W_{s}(\lambda_{s}, \pi)$ be the mean waiting times of primary and secondary customers, when the arrival rate of secondary jobs is $\lambda_{s}$ and scheduling policy is $\pi \in \mathcal{F}$. 
Now, select a suitable pair of pricing parameters $\theta$ and $S_{s}$ for the secondary class customers, a queue scheduling policy $\pi \in \mathcal{F}$ and an appropriate admission rate for the secondary class customers $\lambda_{s}$, that will maximize the expected revenue from their inclusion, while ensuring that the mean waiting time to the primary class customers does not exceed a given quantity $S_p$. Thus, the revenue maximization problem, $\mathcal{P}$, is: 
\begin{eqnarray}
\mbox{\textbf{$\mathcal{P}$:}\space}{\max_{\lambda_s,\pi, S_s , \theta}~} \theta\lambda_s
\end{eqnarray}
\begin{eqnarray}Subject~to:~~ 
W_p(\lambda_s,\pi)\leq S_p,\label{Pri_Qos}\\
W_s(\lambda_s,\pi)\leq S_s, \label{Sec_Qos}\\
\lambda_s \leq \mu - \lambda_p, \label{Sys_sta}\\
\lambda_s \leq a-b\theta-cS_s, \label{Dem}\\
\lambda_s,\theta,S_s\geq 0,\pi \in \mathcal{F}. 
\end{eqnarray}
Constraint (\ref{Pri_Qos}) and (\ref{Sec_Qos}) ensure the service level for the jobs of primary and secondary class respectively. Constraint (\ref{Sys_sta}) is queue stability constraint. {{}
Note that the constraint (\ref{Sys_sta}) will remain non-binding at the optimal solution to ensure finite QoS (mean waiting time) levels for both classes of customers.} Constraint (\ref{Dem}) ensures that the mean arrival rate of secondary class customers should not exceed the demand generated
by charged price $\theta$ and offered service level $S_s$.

{{}It is common in literature to assume that demand for service is sensitive to price and the expected delay (see, for example \cite{demand1}, \cite{demand2} and comprehensive references for the subsequent work in \cite{hassin16}). If one assumes such demand, i.e., $\lambda_s(\theta,\pi)=a - b\theta - cW_s(\lambda_s, \pi)$, then constraint (\ref{Sec_Qos}) is tight and P1 gives the optimal pricing and priority regime for this demand function. So the results of this paper also provide conditions for the optimal solutions of both demand models to coincide.
}

\label{sec:sysdes}
\begin{figure}[h]\centering 
\resizebox{0.485 \textwidth}{!}{\input{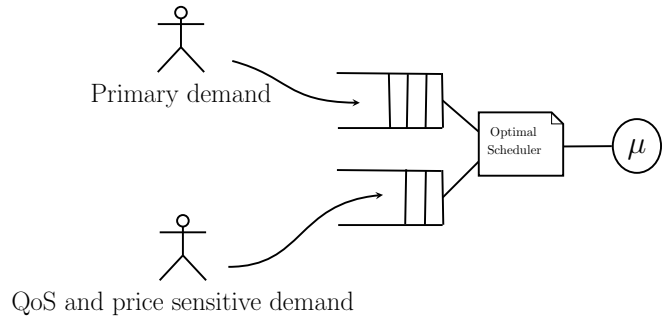}}
 \caption{Schematic view of the model } 
 \label{fig:sks}
\end{figure}
Delay dependent priority queue discipline is identified as a \textit{complete} class of schedulers for mean waiting time performance measure in \cite{federgruen}. Completeness of a scheduling discipline implies that all possible vectors of mean waiting time under any non pre-emptive, non anticipative and work conserving policy can be achieved by suitably chosen parameter from a complete scheduling discipline. Thus, optimizing over $\pi \in \mathcal{F}$ is equivalent to the optimization over DDP scheme. We now briefly explain the mechanism of DDP.

}
In DDP scheme, customers are scheduled according to their delay in queue. Mathematically, the instantaneous priority at time $t$ of class $c$ customer that arrived at time $T_{c}$ is calculated as $q_{c}(t):= (t - T_{c})b_{c}$ for some positive weight $b_{c}$ associated with class $c$. Let $c \in \{p, s\}$ so that $b_{p}$ and $b_{s}$ refer to the weights associated with primary and secondary classes, respectively. At each service completion, the server chooses the next job with the highest instantaneous priority $q_{c}(\cdot)$, $c \in \{p,s\}$. The steady state mean waiting time of each class of customers, derived by Kleinrock \cite{Kleinrock1964}, depends on the ratio of weights ($b_i$) given to each class. Let $\beta:=b_s/b_p$. Note that $\beta=0$ corresponds to static high priority to primary class customers, $\beta=1$ is the global First Come First Serve (FCFS) queuing discipline across classes and $\beta = \infty$ corresponds to static high priority to secondary class customers. Let $W_{p}(\lambda_{s}, \beta)$ and $W_{s}(\lambda_{s}, \beta)$ be the mean waiting times of primary and secondary customers, when the arrival rate of secondary jobs is $\lambda_{s}$ and queue management parameter is $\beta$.

In the view of completeness of DDP,  the revenue maximization problem, $\mathcal{P}$, can be equivalently written as: 
\begin{eqnarray}
\mbox{\textbf{P0:}\space}{\max_{\lambda_s,\beta, S_s , \theta}~} \theta\lambda_s
\end{eqnarray}
\begin{eqnarray}Subject~to:~~ 
W_p(\lambda_s,\beta)\leq S_p, \label{betaPri_Qos}\\
W_s(\lambda_s,\beta)\leq S_s, \label{betaSec_Qos}\\
\lambda_s \leq \mu - \lambda_p, \label{betaSys_sta}\\
\lambda_s \leq a-b\theta-cS_s, \label{betaDem}\\
\lambda_s,\theta,S_s\geq 0,~ \beta\in \{\mathbb{R}^+\cup\infty\}. 
\end{eqnarray}

Optimal solution to problem \textbf{P0} is characterized in \cite{Sudhir_standard_style}. The non-convex problem \textbf{P0} can be presented as following non-convex constrained optimization problem P1 (since constraints (\ref{betaSec_Qos}) and (\ref{betaDem}) are binding at optimality \cite{Sudhir_standard_style})
\begin{eqnarray}\label{objective1}
\mbox{\textbf{P1:}\space} \max_{\lambda_s,\beta}~ \dfrac{1}{b}\left( a\lambda_s -\lambda_s^2 -c \lambda_s W_s(\lambda_s,\beta)\right)  
\end{eqnarray}
\begin{eqnarray}Subject~to:~~ 
W_p(\lambda_s,\beta) \leq S_p, \label{eqn:cons11}\\
\lambda_s \leq \mu - \lambda_p,\\
\lambda_s ,\beta \geq  0. 
\end{eqnarray}
Once the optimal secondary class mean arrival rate $\lambda_{s}^{*}$ and {{}the} optimal queue discipline management parameter $\beta^{*}$ are obtained, the optimal admission price $\theta^{*}$ and {{}the} optimal assured service level to secondary class $S_{s}^{*}$ can be computed using $S_{s}^{*} = W_{s}(\lambda_{s}^{*} ,\beta^{*})$ and $\lambda_{s}^{*} = a-b\theta^{*} - cS_{s}^{*}$.
 
Note that above optimization problem P1 considers only finite values of $\beta$, though $\beta = \infty$ is also a valid decision variable as it corresponds to a static high priority to secondary class customers. Hence, the following one dimensional convex optimization problem, P2, wherein $\beta$ is set to $\infty$ in problem P1, is:
 \begin{equation}
 \mbox{\textbf{P2:}\space}  \max_{\lambda_s}~ \dfrac{1}{b}[a\lambda_s -\lambda_s^2-c\lambda_s\tilde{W_s}(\lambda_s)]
 \label{eqn:objective_P2}
 \end{equation}
 \begin{eqnarray}Subject~to:~~ 
 \tilde{W_p}(\lambda_s) \leq S_p, \label{eqn:constr29}\\
 \lambda_s \leq \mu- \lambda_p,\\
 \lambda_s \geq 0.
 \end{eqnarray}
 where $\tilde{W_p}(\lambda_s) = W_p(\lambda_s, \beta = \infty)$ and $\tilde{W_s}(\lambda_s) = W_s(\lambda_s, \beta = \infty)$. Above two optimization problems (P1 and P2) are analyzed for their global optima, and their optimal values are compared in \cite{Sudhir_standard_style} to give a solution to P0 via a finite step algorithm. {{}The optimal solutions of P1 and P2 are derived using Karush-Kuhn-Tucker (KKT) conditions.}

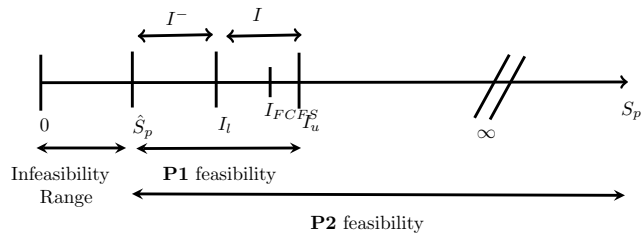
\begin{figure}[ht]
\centering
\resizebox{.48\textwidth}{!}{
\ifx\du\undefined
  \newlength{\du}
\fi
\setlength{\du}{15\unitlength}
\begin{tikzpicture}
\pgftransformxscale{1.000000}
\pgftransformyscale{-1.000000}
\definecolor{dialinecolor}{rgb}{0.000000, 0.000000, 0.000000}
\pgfsetstrokecolor{dialinecolor}
\definecolor{dialinecolor}{rgb}{1.000000, 1.000000, 1.000000}
\pgfsetfillcolor{dialinecolor}
\pgfsetlinewidth{0.100000\du}
\pgfsetdash{}{0pt}
\pgfsetdash{}{0pt}
\pgfsetbuttcap
{
\definecolor{dialinecolor}{rgb}{0.000000, 0.000000, 0.000000}
\pgfsetfillcolor{dialinecolor}
\definecolor{dialinecolor}{rgb}{0.000000, 0.000000, 0.000000}
\pgfsetstrokecolor{dialinecolor}
\draw (19.950000\du,9.100000\du)--(19.950000\du,11.050000\du);
}
\pgfsetlinewidth{0.100000\du}
\pgfsetdash{}{0pt}
\pgfsetdash{}{0pt}
\pgfsetbuttcap
{
\definecolor{dialinecolor}{rgb}{0.000000, 0.000000, 0.000000}
\pgfsetfillcolor{dialinecolor}
\definecolor{dialinecolor}{rgb}{0.000000, 0.000000, 0.000000}
\pgfsetstrokecolor{dialinecolor}
\draw[,,->] (19.950000\du,10.075000\du)--(40.250000\du,10.050000\du);
}
\pgfsetlinewidth{0.100000\du}
\pgfsetdash{}{0pt}
\pgfsetdash{}{0pt}
\pgfsetbuttcap
{
\definecolor{dialinecolor}{rgb}{0.000000, 0.000000, 0.000000}
\pgfsetfillcolor{dialinecolor}
\definecolor{dialinecolor}{rgb}{0.000000, 0.000000, 0.000000}
\pgfsetstrokecolor{dialinecolor}
\draw (23.115000\du,8.990000\du)--(23.115000\du,10.940000\du);
}
\pgfsetlinewidth{0.100000\du}
\pgfsetdash{}{0pt}
\pgfsetdash{}{0pt}
\pgfsetbuttcap
{
\definecolor{dialinecolor}{rgb}{0.000000, 0.000000, 0.000000}
\pgfsetfillcolor{dialinecolor}
\definecolor{dialinecolor}{rgb}{0.000000, 0.000000, 0.000000}
\pgfsetstrokecolor{dialinecolor}
\draw (26.015000\du,8.990000\du)--(26.015000\du,10.940000\du);
}
\pgfsetlinewidth{0.100000\du}
\pgfsetdash{}{0pt}
\pgfsetdash{}{0pt}
\pgfsetbuttcap
{
\definecolor{dialinecolor}{rgb}{0.000000, 0.000000, 0.000000}
\pgfsetfillcolor{dialinecolor}
\definecolor{dialinecolor}{rgb}{0.000000, 0.000000, 0.000000}
\pgfsetstrokecolor{dialinecolor}
\draw (27.880000\du,9.530000\du)--(27.880000\du,10.580000\du);
}

{
\definecolor{dialinecolor}{rgb}{0.000000, 0.000000, 0.000000}
\pgfsetfillcolor{dialinecolor}
\definecolor{dialinecolor}{rgb}{0.000000, 0.000000, 0.000000}
\pgfsetstrokecolor{dialinecolor}
\draw (28.880000\du,9.030000\du)--(28.880000\du,10.980000\du);
}
\pgfsetlinewidth{0.100000\du}
\pgfsetdash{}{0pt}
\pgfsetdash{}{0pt}
\pgfsetbuttcap
\definecolor{dialinecolor}{rgb}{0.000000, 0.000000, 0.000000}
\pgfsetstrokecolor{dialinecolor}
\node[anchor=west] at (19.650000\du,11.550000\du){0};
\definecolor{dialinecolor}{rgb}{0.000000, 0.000000, 0.000000}
\pgfsetstrokecolor{dialinecolor}
\node[anchor=west] at (22.850000\du,11.600000\du){$\hat{S}_p$};
\definecolor{dialinecolor}{rgb}{0.000000, 0.000000, 0.000000}
\pgfsetstrokecolor{dialinecolor}
\node[anchor=west] at (25.800000\du,11.550000\du){$I_l$};

\definecolor{dialinecolor}{rgb}{0.000000, 0.000000, 0.000000}
\pgfsetstrokecolor{dialinecolor}
\node[anchor=west] at (27.4000\du,11.0485000\du){$I_{FCFS}$};

\definecolor{dialinecolor}{rgb}{0.000000, 0.000000, 0.000000}
\pgfsetstrokecolor{dialinecolor}
\node[anchor=west] at (28.715000\du,11.485000\du){$I_u$};
\definecolor{dialinecolor}{rgb}{0.000000, 0.000000, 0.000000}
\pgfsetstrokecolor{dialinecolor}
\node[anchor=west] at (39.750000\du,11.0550000\du){$S_p$};
\pgfsetlinewidth{0.100000\du}
\definecolor{dialinecolor}{rgb}{0.000000, 0.000000, 0.000000}
\pgfsetstrokecolor{dialinecolor}
\node[anchor=west] at (34.750000\du,11.8550000\du){$\infty$};
\pgfsetlinewidth{0.100000\du}
\pgfsetdash{}{0pt}
\pgfsetdash{}{0pt}
\pgfsetbuttcap
{
\definecolor{dialinecolor}{rgb}{0.000000, 0.000000, 0.000000}
\pgfsetfillcolor{dialinecolor}
\pgfsetarrowsstart{to}
\pgfsetarrowsend{to}
\definecolor{dialinecolor}{rgb}{0.000000, 0.000000, 0.000000}
\pgfsetstrokecolor{dialinecolor}
\draw (23.300000\du,8.450000\du)--(25.900000\du,8.400000\du);
}
\pgfsetlinewidth{0.100000\du}
\pgfsetdash{}{0pt}
\pgfsetdash{}{0pt}
\pgfsetbuttcap
{
\definecolor{dialinecolor}{rgb}{0.000000, 0.000000, 0.000000}
\pgfsetfillcolor{dialinecolor}
\pgfsetarrowsstart{to}
\pgfsetarrowsend{to}
\definecolor{dialinecolor}{rgb}{0.000000, 0.000000, 0.000000}
\pgfsetstrokecolor{dialinecolor}
\draw (26.365952\du,8.458116\du)--(28.965952\du,8.408116\du);
}
\definecolor{dialinecolor}{rgb}{0.000000, 0.000000, 0.000000}
\pgfsetstrokecolor{dialinecolor}
\node[anchor=west] at (24.000000\du,7.800000\du){$I^-$};
\definecolor{dialinecolor}{rgb}{0.000000, 0.000000, 0.000000}
\pgfsetstrokecolor{dialinecolor}
\node[anchor=west] at (27.00000\du,7.700000\du){$I$};
\pgfsetlinewidth{0.100000\du}
\pgfsetdash{}{0pt}
\pgfsetdash{}{0pt}
\pgfsetbuttcap
{
\definecolor{dialinecolor}{rgb}{0.000000, 0.000000, 0.000000}
\pgfsetfillcolor{dialinecolor}
\pgfsetarrowsstart{stealth}
\pgfsetarrowsend{stealth}
\definecolor{dialinecolor}{rgb}{0.000000, 0.000000, 0.000000}
\pgfsetstrokecolor{dialinecolor}
\draw (23.250000\du,12.370000\du)--(28.950000\du,12.350000\du);
}
\pgfsetlinewidth{0.100000\du}
\pgfsetdash{}{0pt}
\pgfsetdash{}{0pt}
\pgfsetbuttcap
{
\definecolor{dialinecolor}{rgb}{0.000000, 0.000000, 0.000000}
\pgfsetfillcolor{dialinecolor}
\pgfsetarrowsstart{stealth}
\pgfsetarrowsend{stealth}
\definecolor{dialinecolor}{rgb}{0.000000, 0.000000, 0.000000}
\pgfsetstrokecolor{dialinecolor}
\draw (23.100000\du,13.950000\du)--(40.150000\du,14.000000\du);
}
\pgfsetlinewidth{0.100000\du}
\pgfsetdash{}{0pt}
\pgfsetdash{}{0pt}
\pgfsetbuttcap
{
\definecolor{dialinecolor}{rgb}{0.000000, 0.000000, 0.000000}
\pgfsetfillcolor{dialinecolor}
\definecolor{dialinecolor}{rgb}{0.000000, 0.000000, 0.000000}
\pgfsetstrokecolor{dialinecolor}
\draw (36.150000\du,9.100000\du)--(34.950000\du,11.300000\du);
}
\pgfsetlinewidth{0.100000\du}
\pgfsetdash{}{0pt}
\pgfsetdash{}{0pt}
\pgfsetbuttcap
{
\definecolor{dialinecolor}{rgb}{0.000000, 0.000000, 0.000000}
\pgfsetfillcolor{dialinecolor}
\definecolor{dialinecolor}{rgb}{0.000000, 0.000000, 0.000000}
\pgfsetstrokecolor{dialinecolor}
\draw (36.684598\du,9.157837\du)--(35.484598\du,11.357837\du);
}
\definecolor{dialinecolor}{rgb}{0.000000, 0.000000, 0.000000}
\pgfsetstrokecolor{dialinecolor}
\node[anchor=west] at (23.900000\du,13.300000\du){\textbf{P1} feasibility};
\definecolor{dialinecolor}{rgb}{0.000000, 0.000000, 0.000000}
\pgfsetstrokecolor{dialinecolor}
\node[anchor=west] at (29.000000\du,15.000000\du){\textbf{P2} feasibility};
\pgfsetlinewidth{0.100000\du}
\pgfsetdash{}{0pt}
\pgfsetdash{}{0pt}
\pgfsetbuttcap
{
\definecolor{dialinecolor}{rgb}{0.000000, 0.000000, 0.000000}
\pgfsetfillcolor{dialinecolor}
\pgfsetarrowsstart{stealth}
\pgfsetarrowsend{stealth}
\definecolor{dialinecolor}{rgb}{0.000000, 0.000000, 0.000000}
\pgfsetstrokecolor{dialinecolor}
\draw (19.800000\du,12.350000\du)--(22.900000\du,12.350000\du);
}
\definecolor{dialinecolor}{rgb}{0.000000, 0.000000, 0.000000}
\pgfsetstrokecolor{dialinecolor}
\node[anchor=west] at (18.650000\du,13.300000\du){\text{Infeasibility}};
\definecolor{dialinecolor}{rgb}{0.000000, 0.000000, 0.000000}
\pgfsetstrokecolor{dialinecolor}
\node[anchor=west] at (19.650000\du,14.100000\du){Range};
\end{tikzpicture}}
\caption{Illustration for range of $S_p$ with optimal solutions coming from problem P1 and P2}\label{fig:range}
\end{figure}

{

Solution of these optimization problems are obtained in terms of different ranges of primary class service levels ($S_p$). For service level $S_p \le \hat{S}_p = \dfrac{\lambda_p \psi}{\mu(\mu-\lambda_p)}$, the problems are infeasible {{}(no secondary class customers can be accommodated) because the promised bound for primary customers cannot be achieved, even without considering secondary customer}. If $\dfrac{a}{c}>\dfrac{\lambda_p(2\mu-\lambda_p)}{\mu(\mu-\lambda_p)^2}\psi$ \text{ where } $\psi = \dfrac{1+\sigma^2\mu^2}{2}$ then both the optimization problems P1 and P2 have (global) optimal solutions for $S_{p} \in I^{-} \cup I$ (see Figure \ref{fig:range}), for suitably identified finite intervals $I^-\equiv (\hat{S}_p, I_l )$ and $I\equiv [I_l, I_u) $ (see Theorem 1 and 2 in \cite{Sudhir_standard_style}) {where $I_l$ and $I_u$ are:  
\begin{equation}
I_l = \frac{\lambda_1\psi}{\mu(\mu-\lambda_p)}\text{ and }I_u = \frac{\lambda_1\psi}{(\mu-\lambda_s^{(1)})(\mu-\lambda_1)}
\end{equation}
Here $\lambda_1 = \lambda_p + \lambda_s^{(1)}$ and $\lambda_s^{(1)}$ is the unique root of cubic $G(\lambda_s)$ in interval $(0,\mu-\lambda_p)$ where $G(\lambda_s)$ is
$$ 2\mu\lambda_s^3 -[c\psi+\mu(a+4\phi_0)]\lambda_s^2 +2\phi_0[c\psi + \mu(a+\phi_0)]\lambda_s-a\mu\phi_0^2+c\psi\lambda_p(\mu+\phi_0),\text{\hspace{0.7cm}}$$
with $\phi_0 = \mu-\lambda_p$. It follows from above definition of $I_l$ and $I_u$ that $\hat{S}_p < I_l < I_u$ and hence service level ranges $I^-$ and $I$ are non-empty intervals.} Problem P1 is undefined\footnote{{{} Note that problem P1 is defined for any non-negative but finite $\beta$. As $S_p \uparrow I_u$, $\beta \uparrow \infty$. Thus, problem P1 is undefined for $S_p \ge I_u$. }} for service level range  $S_p \ge I_u$ whereas both problems P1 and P2 are feasible for $S_p\in I^- \cup I$. Hence, one has to compare the optimal objectives of problem P1 and P2 to obtain the global optima of problem P0 in service level range $I^- \cup I$. It was proved in \cite{Sudhir_standard_style} that for $S_p \in I$, the optimal solution of P0 is given by P1, i.e., {{}the} optimal objective of P1, $O_1^*$, is more than that of P2, $O_2^*$, for $S_p \in I$. Further, based on computational evidences, following result was \textit{conjectured} in \cite{Sudhir_standard_style}.

\begin{theorem-non}[\cite{Sudhir_standard_style}]
For $S_p \in I^-$, the optimal solution of P0 is given by optimal solution of P1.
\end{theorem-non}
}

 {

{ We elaborate on the statement of above conjecture and comment that {{}the} conjecture cannot be settled straight forwardly. 
 \begin{figure}[h]
\centering
\resizebox{.4 \textwidth}{!}{
\ifx\du\undefined
  \newlength{\du}
\fi
\setlength{\du}{15\unitlength}
\begin{tikzpicture}
\pgftransformxscale{1.000000}
\pgftransformyscale{-1.000000}
\definecolor{dialinecolor}{rgb}{0.000000, 0.000000, 0.000000}
\pgfsetstrokecolor{dialinecolor}
\definecolor{dialinecolor}{rgb}{1.000000, 1.000000, 1.000000}
\pgfsetfillcolor{dialinecolor}
\pgfsetlinewidth{0.100000\du}
\pgfsetdash{}{0pt}
\pgfsetdash{}{0pt}
\pgfsetbuttcap
{
\definecolor{dialinecolor}{rgb}{0.000000, 0.000000, 0.000000}
\pgfsetfillcolor{dialinecolor}
\pgfsetarrowsend{stealth}
\definecolor{dialinecolor}{rgb}{0.000000, 0.000000, 0.000000}
\pgfsetstrokecolor{dialinecolor}
\draw (6.265000\du,15.535000\du)--(19.065000\du,15.485000\du);
}
\pgfsetlinewidth{0.100000\du}
\pgfsetdash{}{0pt}
\pgfsetdash{}{0pt}
\pgfsetbuttcap
{
\definecolor{dialinecolor}{rgb}{0.000000, 0.000000, 0.000000}
\pgfsetfillcolor{dialinecolor}
\pgfsetarrowsend{stealth}
\definecolor{dialinecolor}{rgb}{0.000000, 0.000000, 0.000000}
\pgfsetstrokecolor{dialinecolor}
\draw (6.315000\du,15.535000\du)--(6.315000\du,4.885000\du);
}
\pgfsetlinewidth{0.000000\du}
\pgfsetdash{{1.000000\du}{1.000000\du}}{0\du}
\pgfsetdash{{0.200000\du}{0.200000\du}}{0\du}
\pgfsetbuttcap
{
\definecolor{dialinecolor}{rgb}{0.000000, 0.000000, 0.000000}
\pgfsetfillcolor{dialinecolor}
\definecolor{dialinecolor}{rgb}{0.000000, 0.000000, 0.000000}
\pgfsetstrokecolor{dialinecolor}
\draw (10.415000\du,4.985000\du)--(10.465000\du,15.435000\du);
}
\pgfsetlinewidth{0.000000\du}
\pgfsetdash{{0.200000\du}{0.200000\du}}{0\du}
\pgfsetdash{{0.200000\du}{0.200000\du}}{0\du}
\pgfsetbuttcap
{
\definecolor{dialinecolor}{rgb}{0.000000, 0.000000, 0.000000}
\pgfsetfillcolor{dialinecolor}
\definecolor{dialinecolor}{rgb}{0.000000, 0.000000, 0.000000}
\pgfsetstrokecolor{dialinecolor}
\draw (17.230239\du,5.025239\du)--(17.280239\du,15.475239\du);
}
\pgfsetlinewidth{0.100000\du}
\pgfsetdash{}{0pt}
\pgfsetdash{}{0pt}
\pgfsetbuttcap
{
\definecolor{dialinecolor}{rgb}{0.000000, 0.000000, 0.000000}
\pgfsetfillcolor{dialinecolor}
\definecolor{dialinecolor}{rgb}{0.000000, 0.000000, 0.000000}
\pgfsetstrokecolor{dialinecolor}
\draw (10.415000\du,9.685000\du)--(17.215000\du,7.735000\du);
}
\pgfsetlinewidth{0.100000\du}
\pgfsetdash{}{0pt}
\pgfsetdash{}{0pt}
\pgfsetbuttcap
{
\definecolor{dialinecolor}{rgb}{0.000000, 0.000000, 0.000000}
\pgfsetfillcolor{dialinecolor}
\definecolor{dialinecolor}{rgb}{0.000000, 0.000000, 0.000000}
\pgfsetstrokecolor{dialinecolor}
\pgfpathmoveto{\pgfpoint{10.515279\du}{9.584628\du}}
\pgfpatharc{217}{184}{11.103314\du and 11.103314\du}
\pgfusepath{stroke}
}
\pgfsetlinewidth{0.100000\du}
\pgfsetdash{}{0pt}
\pgfsetdash{}{0pt}
\pgfsetmiterjoin
\pgfsetbuttcap
{
\definecolor{dialinecolor}{rgb}{0.000000, 0.000000, 0.000000}
\pgfsetfillcolor{dialinecolor}
\definecolor{dialinecolor}{rgb}{0.000000, 0.000000, 1.000000}
\pgfsetstrokecolor{dialinecolor}
\pgfpathmoveto{\pgfpoint{8.300000\du}{15.600000\du}}
\pgfpathcurveto{\pgfpoint{10.885000\du}{10.465000\du}}{\pgfpoint{4.800000\du}{12.600000\du}}{\pgfpoint{17.015000\du}{7.835000\du}}
\pgfusepath{stroke}
}
\pgfsetlinewidth{0.050000\du}
\pgfsetdash{}{0pt}
\pgfsetdash{}{0pt}
\pgfsetbuttcap
{
\definecolor{dialinecolor}{rgb}{0.000000, 0.000000, 0.000000}
\pgfsetfillcolor{dialinecolor}
\pgfsetarrowsstart{to}
\pgfsetarrowsend{to}
\definecolor{dialinecolor}{rgb}{0.000000, 0.000000, 0.000000}
\pgfsetstrokecolor{dialinecolor}
\draw (8.415000\du,16.885000\du)--(10.465000\du,16.885000\du);
}
\pgfsetlinewidth{0.050000\du}
\pgfsetdash{}{0pt}
\pgfsetdash{}{0pt}
\pgfsetbuttcap
{
\definecolor{dialinecolor}{rgb}{0.000000, 0.000000, 0.000000}
\pgfsetfillcolor{dialinecolor}
\pgfsetarrowsstart{to}
\pgfsetarrowsend{to}
\definecolor{dialinecolor}{rgb}{0.000000, 0.000000, 0.000000}
\pgfsetstrokecolor{dialinecolor}
\draw (10.865000\du,16.835000\du)--(17.515000\du,16.835000\du);
}
\definecolor{dialinecolor}{rgb}{0.000000, 0.000000, 0.000000}
\pgfsetstrokecolor{dialinecolor}
\node[anchor=west] at (7.65000\du,16.285000\du){$\hat{S}_p$};
\definecolor{dialinecolor}{rgb}{0.000000, 0.000000, 0.000000}
\pgfsetstrokecolor{dialinecolor}
\node[anchor=west] at (10.215000\du,16.285000\du){$I_l$};
\definecolor{dialinecolor}{rgb}{0.000000, 0.000000, 0.000000}
\pgfsetstrokecolor{dialinecolor}
\node[anchor=west] at (16.8065000\du,16.185000\du){$I_u$};
\definecolor{dialinecolor}{rgb}{0.000000, 0.000000, 0.000000}
\pgfsetstrokecolor{dialinecolor}
\node[anchor=west] at (19.465000\du,15.835000\du){$S_p$};
\definecolor{dialinecolor}{rgb}{0.000000, 0.000000, 0.000000}
\pgfsetstrokecolor{dialinecolor}
\node[anchor=west] at (4.3000\du,4.585000\du){$O1^*$ or $O2^*$};
\definecolor{dialinecolor}{rgb}{0.000000, 0.000000, 0.000000}
\pgfsetstrokecolor{dialinecolor}
\node[anchor=west] at (11.000\du,7.000\du){$O_1^*(\lambda_s^{(1)}, \beta^{(1)}) $};
\pgfsetlinewidth{0.050000\du}
\pgfsetdash{}{0pt}
\pgfsetdash{}{0pt}
\pgfsetbuttcap
{
\definecolor{dialinecolor}{rgb}{0.000000, 0.000000, 0.000000}
\pgfsetfillcolor{dialinecolor}
\pgfsetarrowsend{to}
\definecolor{dialinecolor}{rgb}{0.000000, 0.000000, 0.000000}
\pgfsetstrokecolor{dialinecolor}
\draw (12.465000\du,7.685000\du)--(11.715000\du,9.035000\du);
}
\definecolor{dialinecolor}{rgb}{0.000000, 0.000000, 0.000000}
\pgfsetstrokecolor{dialinecolor}
\node[anchor=west] at (6.4000\du,9.35000\du){$O_1^*(\lambda_s^{(2)},0) $};
\pgfsetlinewidth{0.050000\du}
\pgfsetdash{}{0pt}
\pgfsetdash{}{0pt}
\pgfsetbuttcap
{
\definecolor{dialinecolor}{rgb}{0.000000, 0.000000, 0.000000}
\pgfsetfillcolor{dialinecolor}
\pgfsetarrowsend{to}
\definecolor{dialinecolor}{rgb}{0.000000, 0.000000, 0.000000}
\pgfsetstrokecolor{dialinecolor}
\draw (7.765000\du,9.935000\du)--(8.715000\du,11.485000\du);
}
\definecolor{dialinecolor}{rgb}{0.000000, 0.000000, 0.000000}
\pgfsetstrokecolor{dialinecolor}
\node[anchor=west] at (9.000\du,17.685000\du){$I^-$};
\definecolor{dialinecolor}{rgb}{0.000000, 0.000000, 0.000000}
\pgfsetstrokecolor{dialinecolor}
\node[anchor=west] at (14.065000\du,17.685000\du){$I$};
\pgfsetlinewidth{0.050000\du}
\pgfsetdash{}{0pt}
\pgfsetdash{}{0pt}
\pgfsetbuttcap
{
\definecolor{dialinecolor}{rgb}{0.000000, 0.000000, 0.000000}
\pgfsetfillcolor{dialinecolor}
\pgfsetarrowsend{to}
\definecolor{dialinecolor}{rgb}{0.000000, 0.000000, 0.000000}
\pgfsetstrokecolor{dialinecolor}
\draw (12.515000\du,11.785000\du)--(11.465000\du,10.685000\du);
}
\definecolor{dialinecolor}{rgb}{0.000000, 0.000000, 0.000000}
\pgfsetstrokecolor{dialinecolor}
\node[anchor=west] at (11.3000\du,12.585000\du){$O_2^*(\lambda_s^{(4)}, \beta= \infty) $};
\end{tikzpicture}}
\caption{Contradiction from the concavity of $O^*_2$ {as derived in Theorem 1}}\label{fig:non_concave}
\end{figure}
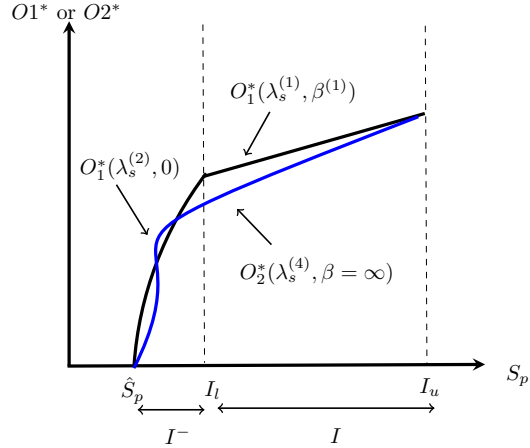 
 Mathematically, {{}the} conjecture states that $O_1^* > O_2^*$ for $S_p \in I^-$. Note that this comparison is not straight forward due to the non-linear dependence of the two objective functions\footnote{{One can possibly push the arguments for optimal objective of problem P1 being more than that of P2 in a small range around $I_l$ within $I^-$. Note that optimal objective of problem P1 is strictly more than that of P2 at ${S}_p = I_l$ (see \cite{Sudhir_standard_style}). Thus, by continuity, the optimal objective of problem P2 continues to be strictly lesser than that of P1 for some small range $\epsilon>0$ of $S_p$ when $S_p \in (I_l -\epsilon, I_l]$. But this leaves the possibility of {{}the} optimal objective of P2 being more than that of P1 beyond this small range, i.e., for $S_p \in (\hat{S}_p, I_l-\epsilon]$.  Thus, in the process, optimal objective of problem P2 will then be non-concave as in {Figure \ref{fig:non_concave}}. Our concavity arguments (in Theorem 1) preclude such a possibility (see Theorem 2).}}, $O_1^*$ in Equation (\ref{objective1}) and $O_2^*$ in Equation (\ref{eqn:objective_P2}) on $\lambda_s$ and $\beta$ which are in turn impacted by the value of $S_p$ in the interval of interest $I^-$. We present a complete proof for this conjecture and discuss its implications in subsequent section.

}

\section{A Proof of Conjecture and finite step algorithm} 
\label{con_proof}
{First, we prove few claims which are useful in obtaining the geometric structure (concavity) of $O_2^*$. Such geometric structure is stated and proved in Theorem 1. The conjecture is proved by contradiction in Theorem 2 by exploiting Theorem 1.  

}
\begin{clm}$\lambda_s^{(3)} > \lambda_s^{(1)}$, where $\lambda_s^{(3)}$ and $\lambda_s^{(1)}$ are the unique roots of cubics $\tilde{G}(\lambda_s)$ and $G(\lambda_s)$ respectively in the interval $(0,\mu-\lambda_p)$ with $G(\lambda_s)$ as:
\begin{equation}
2\mu\lambda_s^3 -[c\psi+\mu(a+4\phi_0)]\lambda_s^2 +2\phi_0[c\psi + \mu(a+\phi_0)]\lambda_s-a\mu\phi_0^2+c\psi\lambda_p(\mu+\phi_0)
\end{equation}
\begin{equation}\label{eqn:gtls}
\tilde{G}(\lambda_s) = 2\mu\lambda_s^3 -[a\mu+c\psi+4\mu^2]\lambda_s^2 +2\mu[a\mu+c\psi+\mu^2]\lambda_s-\mu[a\mu^2-c\psi\lambda_p]
\end{equation}
and $\phi_0 = \mu-\lambda_p$. 
\end{clm}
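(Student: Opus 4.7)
My plan is to show $\tilde{G}(\lambda_s^{(1)})<0$ and then deduce $\lambda_s^{(1)}<\lambda_s^{(3)}$ from the sign behaviour of $\tilde{G}$ around its unique root in $(0,\phi_0)$, where $\phi_0:=\mu-\lambda_p$. The starting observation is that both cubics admit a symmetric factored form: using the identity $\phi_0^2+\lambda_p(\mu+\phi_0)=\mu^2$ to regroup the $c\psi$-terms, one readily checks
\begin{align*}
G(\lambda_s) &= -\mu(a-2\lambda_s)(\phi_0-\lambda_s)^2+c\psi(\lambda_p+\lambda_s)(2\mu-\lambda_p-\lambda_s),\\
\tilde{G}(\lambda_s) &= -\mu(a-2\lambda_s)(\mu-\lambda_s)^2+c\psi\bigl[\mu(\mu+\lambda_p)-(\mu-\lambda_s)^2\bigr].
\end{align*}
Both share the ``$-\mu(a-2\lambda_s)(\,\cdot\,-\lambda_s)^2$'' skeleton; the only structural difference is $\phi_0$ versus $\mu$ in the squared factor, which is exactly where the $\lambda_p$-dependence lives.

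The key step is to substitute $G(\lambda_s^{(1)})=0$, i.e.\ $\mu(a-2\lambda_s^{(1)})(\phi_0-\lambda_s^{(1)})^2=c\psi(\lambda_p+\lambda_s^{(1)})(2\mu-\lambda_p-\lambda_s^{(1)})$, into the first term of $\tilde{G}(\lambda_s^{(1)})$. Introducing the shorthand $p:=\phi_0-\lambda_s^{(1)}$ and $q:=\mu-\lambda_s^{(1)}=p+\lambda_p$, one has $\lambda_p+\lambda_s^{(1)}=\mu-p$ and $2\mu-\lambda_p-\lambda_s^{(1)}=\mu+p$, so the elementary telescoping identity $p^2+(\mu-p)(\mu+p)=\mu^2$ collapses the whole expression into
\[
\tilde{G}(\lambda_s^{(1)}) \;=\; \frac{c\psi\,\mu}{p^2}\Bigl[(\mu+\lambda_p)p^2-\mu q^2\Bigr].
\]
Expanding $q=p+\lambda_p$, the bracket equals $-\lambda_p(-p^2+2\mu p+\mu\lambda_p)$; the quadratic $-p^2+2\mu p+\mu\lambda_p$ is strictly positive on $(0,\mu)$ because its only positive root is $\mu+\sqrt{\mu^2+\mu\lambda_p}>\mu$. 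Since $\lambda_s^{(1)}\in(0,\phi_0)$ forces $p\in(0,\phi_0)\subset(0,\mu)$, this yields $\tilde{G}(\lambda_s^{(1)})<0$.

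The finish uses the paper's standing assumption $\tfrac{a}{c}>\tfrac{\lambda_p(2\mu-\lambda_p)\psi}{\mu\phi_0^2}$: a one-line rearrangement ($\mu(2\mu-\lambda_p)>\phi_0^2$, which is merely $\mu^2+\mu\lambda_p-\lambda_p^2>0$) shows it implies $a\mu^2>c\psi\lambda_p$, hence $\tilde{G}(0)=-\mu(a\mu^2-c\psi\lambda_p)<0$. Being a cubic with positive leading coefficient, starting negative at the origin, and having $\lambda_s^{(3)}$ as its unique root in $(0,\phi_0)$, $\tilde{G}$ is therefore negative on $(0,\lambda_s^{(3)})$ and non-negative on $[\lambda_s^{(3)},\phi_0)$. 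So $\tilde{G}(\lambda_s^{(1)})<0$ forces $\lambda_s^{(1)}<\lambda_s^{(3)}$, as claimed. The main obstacle is purely algebraic: recognising the common factored form of the two cubics and deploying the identity $p^2+(\mu-p)(\mu+p)=\mu^2$ at exactly the right moment so that the substitution $G(\lambda_s^{(1)})=0$ collapses $\tilde{G}(\lambda_s^{(1)})$ into a one-line quadratic inequality in $p$.
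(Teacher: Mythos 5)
Your proof is correct (I verified both factored forms, the collapse of $\tilde{G}(\lambda_s^{(1)})$ to $\tfrac{c\psi\mu}{p^2}\bigl[(\mu+\lambda_p)p^2-\mu q^2\bigr]=\tfrac{c\psi\mu}{p^2}\,\lambda_p\bigl(p^2-2\mu p-\mu\lambda_p\bigr)<0$, and the sign of $\tilde{G}(0)$), but it takes a genuinely different route from the paper. The paper argues via comparative statics in the market parameter $a$: it introduces thresholds $a_l,\tilde{a}_l,\tilde{a}_u$ at which $G(0)$, $\tilde{G}(0)$ and $\tilde{G}(\mu-\lambda_p)$ vanish, runs a case analysis on the relative position of $a$ in these ranges, and in the one nontrivial case compares $\partial G/\partial a=-\mu(\phi_0-\lambda_s)^2$ with $\partial\tilde{G}/\partial a=-\mu(\mu-\lambda_s)^2$ to argue that $\lambda_s^{(3)}$ grows faster in $a$ than $\lambda_s^{(1)}$ starting from an ordered configuration at $a=a_l$ -- a homotopy-in-$a$ argument that leans on auxiliary claims from the cited technical report and is somewhat informal at the step ``decreases faster $\Rightarrow$ root increases faster.'' Your argument is a direct pointwise evaluation: substitute the root equation $G(\lambda_s^{(1)})=0$ into the common factored skeleton of the two cubics and read off the sign of $\tilde{G}(\lambda_s^{(1)})$. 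This buys a self-contained, case-free proof that does not track how the roots move with $a$. The only spot that deserves one more line is the assertion that $\tilde{G}$ is non-negative on $[\lambda_s^{(3)},\phi_0)$: uniqueness of the root plus $\tilde{G}(0)<0$ alone do not exclude $\lambda_s^{(3)}$ being a double root touching zero from below. You should add that since $\lambda_s^{(3)}\in(0,\phi_0)$ forces $a<\tilde{a}_u$, one has $\tilde{G}(\phi_0)>0$; then if $\lambda_s^{(1)}>\lambda_s^{(3)}$, the inequalities $\tilde{G}(\lambda_s^{(1)})<0<\tilde{G}(\phi_0)$ would produce a second root in $(\lambda_s^{(1)},\phi_0)$, contradicting uniqueness. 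With that one-line patch the proof is complete and, in my view, tighter than the paper's.
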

\begin{proof}
$\lambda_s^{(1)}$ is the unique root of cubic $G(\lambda_s)$ in the interval $(0, \mu-\lambda_p)$ whenever $\dfrac{a}{c} > \dfrac{\lambda_p(2\mu-\lambda_p)}{\mu(\mu-\lambda_p)^2}\psi$ \cite[Theorem 1]{Sudhir_standard_style}. Hence, $\lambda_s^{(1)} \in (0,\mu-\lambda_p) \text{ for } a \in (a_l, \infty)$ where $a_l =\dfrac{\lambda_p(2\mu-\lambda_p)}{\mu(\mu-\lambda_p)^2}c\psi $. $\lambda_s^{(3)}$ is the unique root of cubic $\tilde{G}(\lambda_s)$ in the interval $(0, \mu-\lambda_p)$  whenever $\dfrac{\mu-\lambda_p}{\mu\lambda_p} > \dfrac{a\lambda_p - c\psi}{2\mu\lambda_p^2 + c\psi(\mu+\lambda_p)}$ and $\dfrac{a}{c} > \dfrac{\lambda_p}{\mu^2}\psi$ \cite[Theorem 3]{Sudhir_standard_style}. Hence, $\lambda_s^{(3)} \in (0,\mu-\lambda_p)$ for $a \in (\tilde{a}_l, \tilde{a}_u)$ where $\tilde{a}_l = \dfrac{\lambda_p}{\mu^2}c\psi$ and $\tilde{a}_u = 2(\mu-\lambda_p) + \dfrac{c\psi}{\lambda_p}\left[1+\dfrac{\mu^2-\lambda_p^2}{\mu\lambda_p}\right]$. If $\dfrac{\mu-\lambda_p}{\mu\lambda_p} \leq \dfrac{a\lambda_p - c\psi}{2\mu\lambda_p^2 + c\psi(\mu+\lambda_p)}$, i.e., $a \ge \tilde{a}_u$ then $\tilde{G}(\mu-\lambda_p) \leq 0$ and $\tilde{G}(0) < 0$ hold by the definition of cubic $\tilde{G}(.)$ and $a>\tilde{a}_l$. Hence, it follows from Claim 3 in \cite{Sudhir_TR} that $\mu-\lambda_p \leq \lambda_s^{(3)} < \mu$. It follows that $\tilde{a}_l = \dfrac{\lambda_p}{\mu^2}c\psi < \dfrac{\lambda_p(2\mu-\lambda_p)}{\mu(\mu-\lambda_p)^2}c\psi = a_l$. Note that $G(0),~ \tilde{G}(0)$ and $\tilde{G}(\mu-\lambda_p)$ is 0 at $a=a_l, ~\tilde{a}_l \text{ and } \tilde{a}_u$ respectively. Hence, $\lambda_s^{(1)} = 0$, $\lambda_s^{(3)} = 0$ and $\lambda_s^{(3)} = \mu-\lambda_p$ at $a=a_l, ~\tilde{a}_l \text{ and } \tilde{a}_u$ respectively. Note that $a<\tilde{a}_l$ results in infeasible solution (See \cite[Theorem 1-4]{Sudhir_standard_style}). 

\begin{figure}[htbp]
  \centering
  \begin{minipage}[b]{0.4\textwidth}
\includegraphics[scale=0.22]{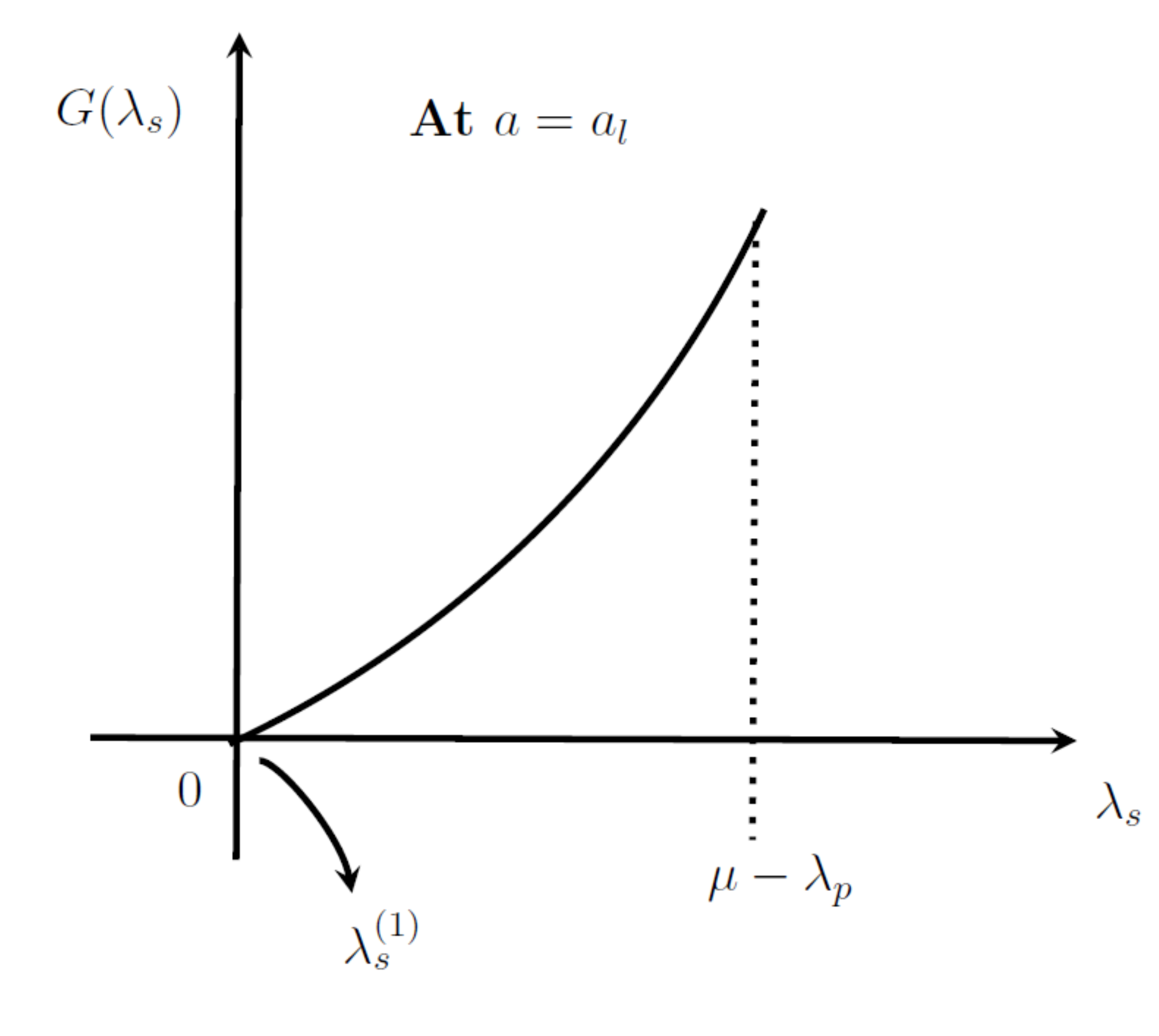}
\caption{$G(\lambda_s)$ vs $\lambda_s$ in range $(0, \mu-\lambda_p)$ at $a=a_l$}\label{fig:Gls}
  \end{minipage}\hfill
  \begin{minipage}[b]{0.4\textwidth}
	\includegraphics[scale=0.22]{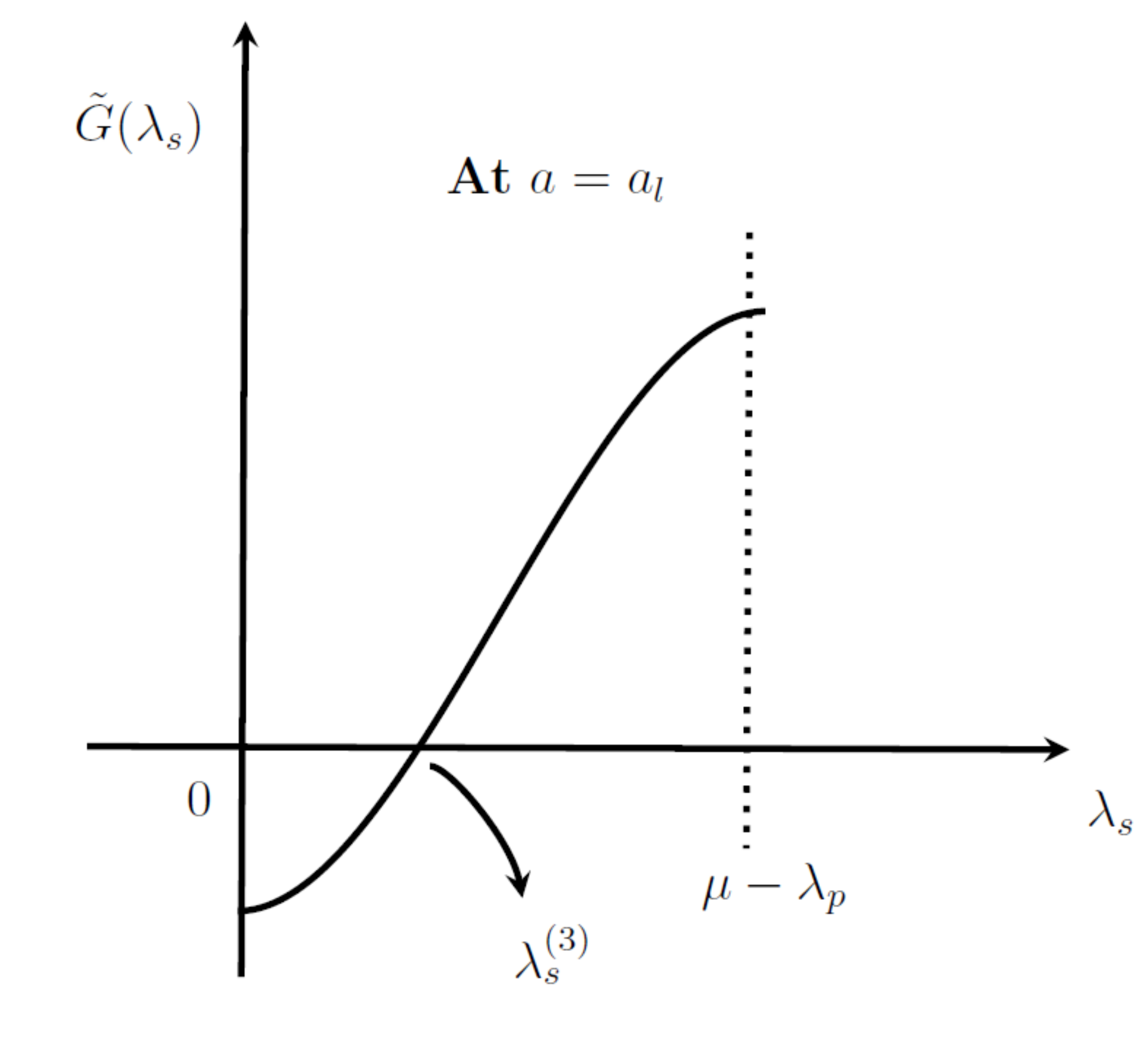}
 \caption{$\tilde{G}(\lambda_s)$ vs $\lambda_s$ in range $(0, \mu-\lambda_p)$ at $a=a_l$}\label{fig:Gtildals}
  \end{minipage}
\end{figure}

On the basis of relative values for $a_l,~\tilde{a}_l$ and $ \tilde{a}_u$ and using the fact that $\lambda_s^{(1)}$ and $\lambda_s^{(3)}$ are increasing functions of $a$ \cite[claim 5, page 24]{Sudhir_TR}, we have:
\begin{itemize}
\item If $a_l < \tilde{a}_u$, then,
\begin{enumerate}
\item $\lambda_s^{(1)} \leq 0$ and $0 < \lambda_s^{(3)} < \mu-\lambda_p$ for $a \in (\tilde{a}_l, a_l]$
\item $0<\lambda_s^{(1)} < \mu-\lambda_p$ and $0 < \lambda_s^{(3)} < \mu-\lambda_p$ for $a \in (a_l, \tilde{a}_u)$
\item $0<\lambda_s^{(1)} < \mu-\lambda_p$ and $\mu-\lambda_p \leq \lambda_s^{(3)} < \mu$ for $a \ge \tilde{a}_u$
\end{enumerate}
\item If $a_l \ge \tilde{a}_u$, then,
\begin{enumerate}
\item $\lambda_s^{(1)} < 0$ and $0 < \lambda_s^{(3)} < \mu-\lambda_p$ for $a \in (\tilde{a}_l, \tilde{a}_u)$
\item $\lambda_s^{(1)} \le 0$ and $\mu-\lambda_p \le \lambda_s^{(3)} < \mu$ for $a \in [\tilde{a}_u,a_l ]$
\item $0<\lambda_s^{(1)} < \mu-\lambda_p$ and $\mu-\lambda_p < \lambda_s^{(3)} < \mu$ for $a >a_l$
\end{enumerate}
\end{itemize}

 Hence, $\lambda_s^{(3)} > \lambda_s^{(1)}$ for all cases except the case when $a_l < \tilde{a}_u$ and $a \in (a_l, \tilde{a}_u)$. Note that $G(\lambda_s)$ and $\tilde{G}(\lambda_s)$ have exactly one root in interval $(0, \mu-\lambda_p)$. Root of $G(\lambda_s)$, $\lambda_s^{(1)}$, is 0 at $a=a_l$ and $G(\lambda_s)$ increases in the interval $(0, \mu-\lambda_p)$. Root of $\tilde{G}(\lambda_s)$, $\lambda_s^{(3)}$, is zero at $a=\tilde{a}_l < a_l$ and $\lambda_s^{(3)}$ is an increasing function of $a$. So $\lambda_s^{(3)} > \lambda_s^{(1)}$ at $a=a_l$. Plots of $G(\lambda_s)$ and $\tilde{G}(\lambda_s)$ are shown in Figure \ref{fig:Gls} and \ref{fig:Gtildals} respectively.

As $\dfrac{\partial G(\lambda_s)}{\partial a} = -\mu(\mu-\lambda_p - \lambda_s)^2 \text{ and }\dfrac{\partial \tilde{G}(\lambda_s)}{\partial a} =  -\mu(\mu-\lambda_s)^2$, $G(\lambda_s)$ and $\tilde{G}(\lambda_s)$ are decreasing functions of $a$. Since $\tilde{G}(\lambda_s)$ decreases with higher rate than $G(\lambda_s)$, $\lambda_s^{(3)}$ will increase with higher rate than $\lambda_s^{(1)}$. Hence, $\lambda_s^{(3)}>\lambda_s^{(1)}$ holds for $a \in (a_l, \tilde{a}_u)$ also and the claim follows. 
\end{proof}
The optimal solution of problem P1 is described in terms of primary class service level range $I$ and $I^-$ (see Theorem 1 and 2 in \cite{Sudhir_standard_style}), while the optimal solution of problem P2 is given by Theorem 3 and 4 of \cite{Sudhir_standard_style} in terms of service level range $J$ and $J^-$. {{}
From Theorem 3 and 4 of \cite{Sudhir_standard_style}, it is clear that if $\frac{\mu -\lambda_p}{\mu \lambda_p} \leq\frac{a\lambda_p-c\psi}{2\mu \lambda_p^2+c\psi(\mu +\lambda_p)}$ then $J^- = (\hat{S}_p,\infty)$ and $J=\phi$ otherwise $J^- = (\hat{S}_p,J_l]$ and $J = (J_l,\infty)$ where $J_l = \frac{\psi \lambda_3}{(\mu-\lambda_s^{(3)})(\mu-\lambda_3)}$ (see Figure \ref{intervals} for graphical illustration).} We present the following claim that relates $I$ and $I^-$ with the range $J$ and $J^-$. Claim 1 is exploited in proving following Claim 2. 
\begin{figure}[ht]
\centering
\resizebox{.5\textwidth}{!}{
\ifx\du\undefined
  \newlength{\du}
\fi
\setlength{\du}{15\unitlength}
\begin{tikzpicture}
\pgftransformxscale{1.000000}
\pgftransformyscale{-1.000000}
\definecolor{dialinecolor}{rgb}{0.000000, 0.000000, 0.000000}
\pgfsetstrokecolor{dialinecolor}
\definecolor{dialinecolor}{rgb}{1.000000, 1.000000, 1.000000}
\pgfsetfillcolor{dialinecolor}
\pgfsetlinewidth{0.100000\du}
\pgfsetdash{}{0pt}
\pgfsetdash{}{0pt}
\pgfsetbuttcap
{
\definecolor{dialinecolor}{rgb}{0.000000, 0.000000, 0.000000}
\pgfsetfillcolor{dialinecolor}
\definecolor{dialinecolor}{rgb}{0.000000, 0.000000, 0.000000}
\pgfsetstrokecolor{dialinecolor}
\draw[, , ->]  (10.000000\du,5.076393\du)--(29.800000\du,5.022714\du);
}
\pgfsetlinewidth{0.100000\du}
\pgfsetdash{}{0pt}
\pgfsetdash{}{0pt}
\pgfsetbuttcap
{
\definecolor{dialinecolor}{rgb}{0.000000, 0.000000, 0.000000}
\pgfsetfillcolor{dialinecolor}
\definecolor{dialinecolor}{rgb}{0.000000, 0.000000, 0.000000}
\pgfsetstrokecolor{dialinecolor}
\draw (10.000000\du,4.172714\du)--(9.950000\du,5.972714\du);
}
\pgfsetlinewidth{0.100000\du}
\pgfsetdash{}{0pt}
\pgfsetdash{}{0pt}
\pgfsetbuttcap
{
\definecolor{dialinecolor}{rgb}{0.000000, 0.000000, 0.000000}
\pgfsetfillcolor{dialinecolor}
\definecolor{dialinecolor}{rgb}{0.000000, 0.000000, 0.000000}
\pgfsetstrokecolor{dialinecolor}
\draw (13.066369\du,4.214083\du)--(13.016369\du,6.014083\du);
}
\pgfsetlinewidth{0.100000\du}
\pgfsetdash{}{0pt}
\pgfsetdash{}{0pt}
\pgfsetbuttcap
{
\definecolor{dialinecolor}{rgb}{0.000000, 0.000000, 0.000000}
\pgfsetfillcolor{dialinecolor}
\definecolor{dialinecolor}{rgb}{0.000000, 0.000000, 0.000000}
\pgfsetstrokecolor{dialinecolor}
\draw (16.031369\du,4.154083\du)--(15.981369\du,5.954083\du);
}

\pgfsetlinewidth{0.100000\du}
\pgfsetdash{}{0pt}
\pgfsetdash{}{0pt}
\pgfsetbuttcap
{
\definecolor{dialinecolor}{rgb}{0.000000, 0.000000, 0.000000}
\pgfsetfillcolor{dialinecolor}
\definecolor{dialinecolor}{rgb}{0.000000, 0.000000, 0.000000}
\pgfsetstrokecolor{dialinecolor}
\draw (18.34\du,4.544083\du)--(18.3496369\du,5.544083\du);
}
\pgfsetlinewidth{0.100000\du}
\pgfsetdash{}{0pt}
\pgfsetdash{}{0pt}
\pgfsetbuttcap
{
\definecolor{dialinecolor}{rgb}{0.000000, 0.000000, 0.000000}
\pgfsetfillcolor{dialinecolor}
\definecolor{dialinecolor}{rgb}{0.000000, 0.000000, 0.000000}
\pgfsetstrokecolor{dialinecolor}
\draw (20.046369\du,4.144083\du)--(19.996369\du,5.944083\du);
}
\pgfsetlinewidth{0.100000\du}
\pgfsetdash{}{0pt}
\pgfsetdash{}{0pt}
\pgfsetbuttcap
{
\definecolor{dialinecolor}{rgb}{0.000000, 0.000000, 0.000000}
\pgfsetfillcolor{dialinecolor}
\definecolor{dialinecolor}{rgb}{0.000000, 0.000000, 0.000000}
\pgfsetstrokecolor{dialinecolor}
\draw (22.011369\du,4.184083\du)--(21.961369\du,5.984083\du);
}
\pgfsetlinewidth{0.100000\du}
\pgfsetdash{}{0pt}
\pgfsetdash{}{0pt}
\pgfsetbuttcap
{
\definecolor{dialinecolor}{rgb}{0.000000, 0.000000, 0.000000}
\pgfsetfillcolor{dialinecolor}
\definecolor{dialinecolor}{rgb}{0.000000, 0.000000, 0.000000}
\pgfsetstrokecolor{dialinecolor}
\draw (27.150000\du,4.172714\du)--(25.900000\du,6.172714\du);
}
\pgfsetlinewidth{0.100000\du}
\pgfsetdash{}{0pt}
\pgfsetdash{}{0pt}
\pgfsetbuttcap
{
\definecolor{dialinecolor}{rgb}{0.000000, 0.000000, 0.000000}
\pgfsetfillcolor{dialinecolor}
\definecolor{dialinecolor}{rgb}{0.000000, 0.000000, 0.000000}
\pgfsetstrokecolor{dialinecolor}
\draw (27.800000\du,4.172714\du)--(26.583131\du,6.180845\du);
}
\definecolor{dialinecolor}{rgb}{0.000000, 0.000000, 0.000000}
\pgfsetstrokecolor{dialinecolor}
\node[anchor=west] at (9.40000\du,6.772714\du){$0$};
\definecolor{dialinecolor}{rgb}{0.000000, 0.000000, 0.000000}
\pgfsetstrokecolor{dialinecolor}
\node[anchor=west] at (12.40000\du,6.822714\du){$\hat{S}_p$};
\definecolor{dialinecolor}{rgb}{0.000000, 0.000000, 0.000000}
\pgfsetstrokecolor{dialinecolor}
\node[anchor=west] at (15.550000\du,6.722714\du){$I_l$};

\definecolor{dialinecolor}{rgb}{0.000000, 0.000000, 0.000000}
\pgfsetstrokecolor{dialinecolor}
\node[anchor=west] at (17.500000\du,6.0\du){$I_{FCFS}$};
\definecolor{dialinecolor}{rgb}{0.000000, 0.000000, 0.000000}
\pgfsetstrokecolor{dialinecolor}
\node[anchor=west] at (19.500000\du,6.672714\du){$I_u$};
\definecolor{dialinecolor}{rgb}{0.000000, 0.000000, 0.000000}
\pgfsetstrokecolor{dialinecolor}
\node[anchor=west] at (21.400000\du,6.572714\du){$J_l$};
\definecolor{dialinecolor}{rgb}{0.000000, 0.000000, 0.000000}
\pgfsetstrokecolor{dialinecolor}
\node[anchor=west] at (29.5100000\du,5.96102714\du){$S_p$};
\pgfsetlinewidth{0.100000\du}
\pgfsetdash{}{0pt}
\pgfsetdash{}{0pt}
\pgfsetbuttcap
\definecolor{dialinecolor}{rgb}{0.000000, 0.000000, 0.000000}
\pgfsetstrokecolor{dialinecolor}
\node[anchor=west] at (25.100000\du,6.722714\du){$\infty$};
\pgfsetlinewidth{0.100000\du}
\pgfsetdash{}{0pt}
\pgfsetdash{}{0pt}
\pgfsetbuttcap
{
\definecolor{dialinecolor}{rgb}{0.000000, 0.000000, 0.000000}
\pgfsetfillcolor{dialinecolor}
\pgfsetarrowsstart{stealth}
\pgfsetarrowsend{stealth}
\definecolor{dialinecolor}{rgb}{0.000000, 0.000000, 0.000000}
\pgfsetstrokecolor{dialinecolor}
\draw (13.100000\du,3.872714\du)--(15.850000\du,3.872714\du);
}
\pgfsetlinewidth{0.100000\du}
\pgfsetdash{}{0pt}
\pgfsetdash{}{0pt}
\pgfsetbuttcap
{
\definecolor{dialinecolor}{rgb}{0.000000, 0.000000, 0.000000}
\pgfsetfillcolor{dialinecolor}
\pgfsetarrowsstart{stealth}
\pgfsetarrowsend{stealth}
\definecolor{dialinecolor}{rgb}{0.000000, 0.000000, 0.000000}
\pgfsetstrokecolor{dialinecolor}
\draw (16.300000\du,3.922714\du)--(19.800000\du,3.922714\du);
}
\definecolor{dialinecolor}{rgb}{0.000000, 0.000000, 0.000000}
\pgfsetstrokecolor{dialinecolor}
\node[anchor=west] at (14.00000\du,3.322714\du){$I^-$};
\definecolor{dialinecolor}{rgb}{0.000000, 0.000000, 0.000000}
\pgfsetstrokecolor{dialinecolor}
\node[anchor=west] at (17.550000\du,3.422714\du){$I$};
\pgfsetlinewidth{0.100000\du}
\pgfsetdash{}{0pt}
\pgfsetdash{}{0pt}
\pgfsetbuttcap
{
\definecolor{dialinecolor}{rgb}{0.000000, 0.000000, 0.000000}
\pgfsetfillcolor{dialinecolor}
\pgfsetarrowsstart{stealth}
\pgfsetarrowsend{stealth}
\definecolor{dialinecolor}{rgb}{0.000000, 0.000000, 0.000000}
\pgfsetstrokecolor{dialinecolor}
\draw (13.100000\du,2.122714\du)--(29.800000\du,2.122714\du);
}
\pgfsetlinewidth{0.100000\du}
\pgfsetdash{}{0pt}
\pgfsetdash{}{0pt}
\pgfsetbuttcap
{
\definecolor{dialinecolor}{rgb}{0.000000, 0.000000, 0.000000}
\pgfsetfillcolor{dialinecolor}
\pgfsetarrowsstart{stealth}
\pgfsetarrowsend{stealth}
\definecolor{dialinecolor}{rgb}{0.000000, 0.000000, 0.000000}
\pgfsetstrokecolor{dialinecolor}
\draw (12.800000\du,7.922714\du)--(21.800000\du,7.922714\du);
}
\pgfsetlinewidth{0.100000\du}
\pgfsetdash{}{0pt}
\pgfsetdash{}{0pt}
\pgfsetbuttcap
{
\definecolor{dialinecolor}{rgb}{0.000000, 0.000000, 0.000000}
\pgfsetfillcolor{dialinecolor}
\pgfsetarrowsstart{stealth}
\pgfsetarrowsend{stealth}
\definecolor{dialinecolor}{rgb}{0.000000, 0.000000, 0.000000}
\pgfsetstrokecolor{dialinecolor}
\draw (22.150000\du,7.972714\du)--(30.000000\du,7.972714\du);
}
\definecolor{dialinecolor}{rgb}{0.000000, 0.000000, 0.000000}
\pgfsetstrokecolor{dialinecolor}
\node[anchor=west] at (17.0000\du,8.60022714\du){$J^-$};
\definecolor{dialinecolor}{rgb}{0.000000, 0.000000, 0.000000}
\pgfsetstrokecolor{dialinecolor}
\node[anchor=west] at (25.150000\du,8.602714\du){$J$};
\definecolor{dialinecolor}{rgb}{0.000000, 0.000000, 0.000000}
\pgfsetstrokecolor{dialinecolor}
\node[anchor=west] at (19.600000\du,1.0372714\du){$J^- $};
\definecolor{dialinecolor}{rgb}{0.000000, 0.000000, 0.000000}
\pgfsetstrokecolor{dialinecolor}
\node[anchor=west] at (20.900000\du,1.0372714\du){If $\dfrac{\mu -\lambda_p}{\mu \lambda_p} \leq\dfrac{a\lambda_p-c\psi}{2\mu \lambda_p^2+c\psi(\mu +\lambda_p)}$};
\definecolor{dialinecolor}{rgb}{0.000000, 0.000000, 0.000000}
\pgfsetstrokecolor{dialinecolor}
\node[anchor=west] at (17.200000\du,10.272714\du){If $\dfrac{\mu -\lambda_p}{\mu \lambda_p} > \dfrac{a\lambda_p-c\psi}{2\mu \lambda_p^2+c\psi(\mu +\lambda_p)}$};
\end{tikzpicture}}
\caption{Relation among intervals of $S_p$}\label{intervals}
\end{figure}
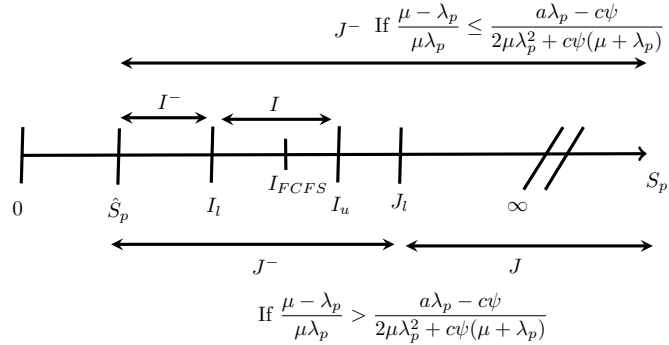
\begin{clm} \textit{ Range $I^-\cup I$ is contained in $J^-$, i.e., $I^- \cup I \subset J^-$.}\end{clm}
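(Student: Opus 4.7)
The plan is to prove $I^-\cup I\subset J^-$ by case analysis on the shape of $J^-$ as described just before the claim, and then exploit Claim~1 together with monotonicity of a single-variable auxiliary function. Recall that $I^-\cup I=(\hat{S}_p, I_u)$ with $I_u=\frac{\lambda_1\psi}{(\mu-\lambda_s^{(1)})(\mu-\lambda_1)}$, and that $J^-$ is either $(\hat{S}_p,\infty)$ or $(\hat{S}_p,J_l]$ with $J_l=\frac{\lambda_3\psi}{(\mu-\lambda_s^{(3)})(\mu-\lambda_3)}$, depending on the stated inequality involving $a,c,\psi,\mu,\lambda_p$ (here $\lambda_3=\lambda_p+\lambda_s^{(3)}$).

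\textbf{Case 1.} If $\frac{\mu-\lambda_p}{\mu\lambda_p}\le \frac{a\lambda_p-c\psi}{2\mu\lambda_p^2+c\psi(\mu+\lambda_p)}$, then $J^-=(\hat{S}_p,\infty)$ and the containment is immediate since $I_u<\infty$.

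\textbf{Case 2.} Otherwise $J^-=(\hat{S}_p,J_l]$, and it suffices to show $I_u\le J_l$. Observe that both $I_u$ and $J_l$ have the same form $\psi\cdot f(\lambda_s^{(\cdot)})$, where
\begin{equation*}
f(x)=\frac{\lambda_p+x}{(\mu-x)(\mu-\lambda_p-x)},\qquad x\in(0,\mu-\lambda_p).
\end{equation*}
The key step is to prove that $f$ is strictly increasing on $(0,\mu-\lambda_p)$. The numerator $\lambda_p+x$ is obviously increasing, so it is enough to show that the denominator $D(x)=(\mu-x)(\mu-\lambda_p-x)$ is strictly positive and strictly decreasing there. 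Positivity is clear from the range of $x$. Expanding, $D(x)=\mu(\mu-\lambda_p)-(2\mu-\lambda_p)x+x^2$, so $D'(x)=-(2\mu-\lambda_p)+2x<-(2\mu-\lambda_p)+2(\mu-\lambda_p)=-\lambda_p<0$ on the required interval; hence $f$ is strictly increasing. Now Claim~1 gives $\lambda_s^{(3)}>\lambda_s^{(1)}$, which yields $f(\lambda_s^{(3)})>f(\lambda_s^{(1)})$, i.e.\ $J_l>I_u$, and the inclusion $I^-\cup I=(\hat{S}_p,I_u)\subset(\hat{S}_p,J_l]=J^-$ follows.

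One subtlety to address is that Claim~1 must apply in Case~2; the hypothesis of that case is exactly the condition which (together with $a>\tilde{a}_l$, automatic on the feasible range) places $\lambda_s^{(3)}$ in $(0,\mu-\lambda_p)$, so Claim~1 is indeed applicable and $f(\lambda_s^{(3)})$ is well-defined via the same formula as $J_l$. The only non-routine step is the monotonicity of $f$; everything else is an immediate consequence once that is established, and the proof reduces to a short calculation plus an appeal to Claim~1.
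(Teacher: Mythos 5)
Your proof is correct and follows essentially the same route as the paper: the same two-case split on the shape of $J^-$, the same auxiliary function (your $f$ is the paper's $\xi$ up to the factor $\psi$), the same monotonicity argument, and the same appeal to Claim~1 to conclude $J_l > I_u$. The only cosmetic difference is that you establish monotonicity via the increasing numerator and decreasing positive denominator, while the paper differentiates the quotient directly; your remark that Case~2's hypothesis is exactly what places $\lambda_s^{(3)}$ in $(0,\mu-\lambda_p)$ is a worthwhile extra precision.
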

\begin{proof}

The solution of optimization problem P2 is given by Theorem 3 and 4 of \cite{Sudhir_standard_style} in terms of the service level range $J$ and $J^-$ (see Figure \ref{intervals}). So, the entire feasible range of service level $(\hat{S}_p,\infty)$ is divided in interval $J^-\cup J$ as shown in Figure \ref{intervals}. 

%

From Theorem 3 and 4 of \cite{Sudhir_standard_style}, it is clear that if $\dfrac{\mu -\lambda_p}{\mu \lambda_p} \leq\dfrac{a\lambda_p-c\psi}{2\mu \lambda_p^2+c\psi(\mu +\lambda_p)}$ then $J^- = (\hat{S}_p,\infty)$ and $J=\phi$ otherwise $J^- = (\hat{S}_p,J_l]$ and $J = (J_l,\infty)$ where $J_l = \dfrac{\psi \lambda_3}{(\mu-\lambda_s^{(3)})(\mu-\lambda_3)}$ and $\hat{S}_p = \dfrac{\psi\lambda_p}{\mu(\mu-\lambda_p)}$. Now, consider the following two cases to prove the claim:

\indent \textit{\textbf{Case 1:}} When $\dfrac{\mu -\lambda_p}{\mu \lambda_p} \leq \dfrac{a\lambda_p-c\psi}{2\mu \lambda_p^2+c\psi(\mu +\lambda_p)}$\\
Interval $J^-$ becomes $(\hat{S}_p,\infty)$ under the condition of this case as discussed above. Since lower and upper limit of $I^-\cup I$ are $\hat{S}_p$ and $I_u$ respectively, these limits are finite (see \cite{Sudhir_standard_style}). Hence, $I^- \cup I \subset J^-$ holds.\\

\indent \textit{\textbf{Case 2:}} When $\dfrac{\mu -\lambda_p}{\mu \lambda_p} > \dfrac{a\lambda_p-c\psi}{2\mu \lambda_p^2+c\psi(\mu +\lambda_p)}$

In this case, $J^- = (\hat{S}_p,J_l]$ where $J_l = \dfrac{\psi\lambda_3}{(\mu-\lambda_s^{(3)})(\mu-\lambda_3)}$ and $I^-\cup I = (\hat{S}_p,I_u)$ where $I_u = $ $\frac{\psi\lambda_1}{(\mu-\lambda_s^{(1)})(\mu-\lambda_1)} $ and by definition $\lambda_1 = \lambda_p + \lambda_s^{(1)},~ \lambda_3 = \lambda_p +\lambda_s^{(3)}$. Note that, $I_u = \xi(\lambda_s^{(1)})$ and $J_l = \xi(\lambda_s^{(3)})$ where $\xi(\lambda_s) = \dfrac{\psi\lambda}{(\mu-\lambda_s)(\mu-\lambda)}$ and $\lambda = \lambda_p + \lambda_s$. On computing partial derivative of $\xi(\lambda_s)$ with respect to $\lambda_s$, we get $\dfrac{\partial\xi(\lambda_s)}{\partial\lambda_s}=\dfrac{\psi(\mu(\mu-\lambda_s)+\lambda(\mu - \lambda))}{(\mu - \lambda_s)^2(\mu - \lambda)^2} >0$, i.e., $\xi(\lambda_s)$ is an increasing function of $\lambda_s$. So, $\xi(\lambda_s^{(3)})>\xi(\lambda_s^{(1)})$ iff $\lambda_s^{(3)}>\lambda_s^{(1)}$, But $\lambda_s^{(3)} > \lambda_s^{(1)}$ follows from claim 1. Thus, $\xi(\lambda_s^{(3)})>\xi(\lambda_s^{(1)})$ follows  and equivalently $J_l > I_u$ holds. Hence $I^- \cup I \subset J^-$ holds in this case too, and the claim follows.
\end{proof}

It follows from Claim 2 that the nature of the optimal objective of problem P2 in service level range $I^- \cup I$ can be identified by finding the same in service level range $J^-$. Such nature {{}(concave increasing)} of the objective function is identified in the following theorem and will be useful in proving the conjecture. 

\begin{thm}
The optimal objective function for problem $P2$, i.e., $O_2^*$ is increasing concave in service level range $I^- \cup I$, while the optimal objective function for problem P1, $O_1^*$, is increasing concave in $I^-$ and linearly increasing in $I$.
\end{thm}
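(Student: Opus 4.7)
The plan is to treat each of the three regimes separately, exploiting the optimal-policy structure recorded in Theorems 1--4 of \cite{Sudhir_standard_style} and reducing the question in each case to elementary convexity/concavity of rational functions in $\lambda_s$, together with an affine or implicit reparametrization in $S_p$.

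\textbf{Regime $I$ for $O_1^*$ (linearly increasing).} By the characterization in \cite{Sudhir_standard_style}, on $I$ the optimum $\lambda_s=\lambda_s^{(1)}$ (the root of the cubic $G$) is constant in $S_p$, and $\beta$ adjusts so that $W_p(\lambda_s^{(1)},\beta)=S_p$ binds. The Kleinrock conservation law for any non-preemptive, non-anticipative, work-conserving M/G/1 scheduler gives
\[
\lambda_p W_p(\lambda_s,\beta)+\lambda_s W_s(\lambda_s,\beta)=\frac{(\lambda_p+\lambda_s)^{2}\psi}{\mu(\mu-\lambda_p-\lambda_s)},
\]
a quantity independent of $\beta$. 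Solving for $W_s$ using $W_p=S_p$ and substituting into the P1 objective at $\lambda_s=\lambda_s^{(1)}$ yields $O_1^{*}(S_p)=\text{const}+(c\lambda_p/b)\,S_p$, i.e.\ linear and strictly increasing in $S_p$.

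\textbf{Regime $I^-$ for $O_1^*$ (concave increasing).} On $I^-$ the optimum is at $\beta=0$ (static primary priority) with the primary constraint binding, giving $\lambda_s(S_p)=S_p\mu(\mu-\lambda_p)/\psi-\lambda_p$, an affine strictly increasing function of $S_p$. Using $W_s(\lambda_s,0)=\psi(\lambda_p+\lambda_s)/[(\mu-\lambda_p)(\mu-\lambda_p-\lambda_s)]$ and the substitution $v=\mu-\lambda_p-\lambda_s$,
\[
\frac{\lambda_s(\lambda_p+\lambda_s)}{\mu-\lambda_p-\lambda_s}=\frac{\mu(\mu-\lambda_p)}{v}-(2\mu-\lambda_p)+v,
\]
which is convex in $v$ (hence in $\lambda_s$). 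Therefore $-c\lambda_s W_s(\lambda_s,0)$ is concave in $\lambda_s$, and combining with the concave term $a\lambda_s-\lambda_s^{2}$ shows that $O_1(\cdot,0)$ is concave in $\lambda_s$. Affine composition with $\lambda_s(S_p)$ transfers concavity to $S_p$; monotonicity follows because $\lambda_s(S_p)$ stays below the unconstrained maximizer of $O_1(\cdot,0)$ throughout $I^-$.

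\textbf{Regime $I^-\cup I\subset J^-$ for $O_2^*$ (concave increasing).} By Claim 2 this range lies in $J^-$, where the primary constraint for P2 binds: $\tilde W_p(\lambda_s)=\psi(\lambda_p+\lambda_s)/[(\mu-\lambda_s)(\mu-\lambda_p-\lambda_s)]=S_p$, so $\lambda_s(S_p)=\tilde W_p^{-1}(S_p)$ is well defined and strictly increasing. I will verify two ingredients. (a) $O_2(\lambda_s)=\frac{1}{b}(a\lambda_s-\lambda_s^{2}-c\lambda_s\tilde W_s(\lambda_s))$ with $\tilde W_s(\lambda_s)=\psi(\lambda_p+\lambda_s)/[\mu(\mu-\lambda_s)]$ is concave and increasing in $\lambda_s$ on $\{\lambda_s<\lambda_s^{(4)}\}$: using $u=\mu-\lambda_s$, $\lambda_s(\lambda_p+\lambda_s)/(\mu-\lambda_s)=\mu(\mu+\lambda_p)/u-(2\mu+\lambda_p)+u$ is convex in $u$, and monotonicity holds since $J^-$ corresponds precisely to $\lambda_s<\lambda_s^{(4)}$. (b) $\tilde W_p$ is strictly convex in $\lambda_s$: with $t=\mu-\lambda_p-\lambda_s$ and partial fractions,
\[
\tilde W_p(t)=\psi\left[\frac{\mu}{\lambda_p t}-\frac{\mu+\lambda_p}{\lambda_p(t+\lambda_p)}\right],\qquad \tilde W_p''(t)=\frac{2\psi}{\lambda_p}\left[\frac{\mu}{t^{3}}-\frac{\mu+\lambda_p}{(t+\lambda_p)^{3}}\right]>0,
\]
because $\mu(t+\lambda_p)^{3}>(\mu+\lambda_p)t^{3}$ for $0<t<\mu$ (divide by $t^3$ and use $(1+\lambda_p/t)^{3}\ge(1+\lambda_p/\mu)^{3}\ge 1+\lambda_p/\mu$). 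Convexity of $\tilde W_p$ makes its inverse $\lambda_s(S_p)$ concave and increasing. Since $O_2$ is concave and increasing and $\lambda_s(\cdot)$ is concave and increasing, their composition $O_2^{*}(S_p)=O_2(\lambda_s(S_p))$ is concave and increasing on $I^-\cup I$.

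\textbf{Main obstacle.} The most delicate step is the convexity of $\tilde W_p$ in $\lambda_s$: the direct derivative is a messy rational expression and the partial-fraction rewrite together with the clean inequality $\mu(t+\lambda_p)^{3}>(\mu+\lambda_p)t^{3}$ is what keeps the algebra tractable. A secondary care-point is the bookkeeping across the two subintervals of $I^-\cup I$ for $O_2^*$, which is resolved by Claim 2 placing the entire range inside the single regime $J^-$ where the primary constraint binds.
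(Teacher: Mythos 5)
Your proof is essentially correct but follows a genuinely different route from the paper. The paper works through sensitivity analysis: it identifies $\partial O_1^*/\partial S_p=-u_1^f$ and $\partial O_2^*/\partial S_p=-v_1^i$ via the Lagrange-multiplier interpretation of the binding constraint $W_p=S_p$, imports explicit formulas for $u_1^f,v_1^i$ and their monotonicity in $\lambda_s$ from the companion technical report, and reads off signs of first and second derivatives from the cubics $G$ and $\tilde G$. You instead substitute the closed-form optimal policies into the objectives and argue by elementary convexity: the Kleinrock conservation law gives the exact linear form $O_1^*=\mathrm{const}+(c\lambda_p/b)S_p$ on $I$ (which recovers, more transparently, the paper's $-u_1^f=c\lambda_p/b$ there); on $I^-$ and $J^-$ you show concavity of the objective in $\lambda_s$ via the substitutions $v=\mu-\lambda_p-\lambda_s$, $u=\mu-\lambda_s$, establish convexity of $\tilde W_p$ by partial fractions, and transfer everything to $S_p$ by affine or concave-inverse composition. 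Your version is more self-contained (it does not rely on the reported formulas for the multipliers or their derivatives), at the cost of needing the convexity of $\tilde W_p$, which the multiplier approach avoids.

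Two loose ends are worth tightening. First, your monotonicity claims ("$\lambda_s(S_p)$ stays below the unconstrained maximizer of $O_1(\cdot,0)$ throughout $I^-$", and the analogous statement for $O_2$) are asserted rather than proved; the cleanest fix is to observe that $S_p$ enters each problem only through the constraint $W_p\leq S_p$, so the feasible region, and hence the optimal value, is nondecreasing in $S_p$ — monotonicity is then immediate without locating any maximizer. (Alternatively, for P2 the unconstrained maximizer is $\lambda_s^{(3)}$, the root of $\tilde G$, and on $J^-$ one has $\lambda_s^{(4)}(S_p)\leq\lambda_s^{(3)}$ with equality at $J_l$; your text conflates $\lambda_s^{(4)}$ with $\lambda_s^{(3)}$ in part (a).) Second, the relevant range for $t=\mu-\lambda_p-\lambda_s$ is $(0,\mu-\lambda_p)$, not $(0,\mu)$; your inequality $\mu(t+\lambda_p)^3>(\mu+\lambda_p)t^3$ holds on the smaller interval for the same reason, so nothing breaks.
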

\begin{proof}

It is shown in \cite{Sudhir_standard_style} that $O_1^*$ is increasing concave in $I^-$ and linearly increasing in $I$, we give details here for its completeness. The optimal objective of problem P1 and P2 are given by $O_{1}^{*}$ and $O_{2}^{*}$, and the corresponding optimal solutions are given by $(\lambda_s^f,\beta^f)$ and $(\lambda_s^i,\infty)$, respectively. Superscript `$f$' and `$i$' denote solution corresponding to finite and infinite $\beta$. In case of finite $\beta$, solution is given by Theorem 1 and 2 in \cite{Sudhir_standard_style}, for $S_p \in I^-\cup I$. Hence, the waiting time constraint is binding ($W_p(\lambda_s,\beta) \leq S_p$) from statement of these theorems. 

In case of infinite $\beta$, the solution is given by Theorem 4 in \cite{Sudhir_standard_style}, for $S_p \in I^- \cup I$ as $I^- \cup I \subset J^-$ (Claim 2). It follows from Theorem 4 in \cite{Sudhir_standard_style} that the primary class customer's waiting time constraint is  binding. Therefore, waiting time constraint $W_p(\lambda_s,\beta) \leq S_p$ is always binding for $S_p \in I^- \cup I$ irrespective of nature of optimal solution (finite or infinite $\beta$). By using the interpretation of Lagrange multiplier (Proposition 3.3.3 in \cite[page~315]{bertsekas} and \cite[page 25]{Sudhir_TR}), we have
\begin{equation}\label{eqn:22}
\dfrac{\partial O_1^*}{\partial S_p} = -u_1^f~\text{and}~\dfrac{\partial O_2^*}{\partial S_p} = -v_1^i 
\end{equation}
where $u_1^f$ and $v_1^i$ are the corresponding values of the Lagrangian multipliers associated with the constraint $W_p(\lambda_s,\beta) = S_p$ of the optimization problems P1 and P2 respectively. As defined in \cite[page~25]{Sudhir_TR}:
\begin{eqnarray}\label{eqn:23}
u_1^f = \dfrac{(\mu-\lambda_p)G(\lambda_s^f)}{b\psi(\mu-\lambda_p-\lambda_s^f)^2}-\dfrac{c\lambda_p}{b}~~\text{and}\\ v_1^i = \dfrac{(\mu-\lambda_p-\lambda_s^i)^2\tilde{G}(\lambda_s^i)}{b\psi\mu[\mu(\mu+\lambda_p)-(\lambda_p+\lambda_s^i)^2]}\label{eqn:vi}
\end{eqnarray}
Now, we look for the sign of $v_1^i$ and $u_1^f$ in service level range $I^- \cup I$ to study the nature of {{}the} optimal objective function.

The optimal objective of optimization problem P2, i.e., $O_2^*$ is given by Theorem 4 of \cite{Sudhir_standard_style} for service level range $S_p \in I^-\cup I$ as $ I^-\cup I \subset J^-$. $\lambda_s^{(4)}$ is the optimal admission rate for secondary class customers in Theorem 4. Hence, $\lambda_s^i = \lambda_s^{(4)}$. Sign of $v_1^i$ is decided by $\tilde{G}(\lambda_s^i)$. $\lambda_s^{(3)}$ is the root of cubic $\tilde{G}(\lambda_s)$ as discussed in Claim 1. It follows that $\lambda_s^{(4)} < \lambda_s^{(3)}$ (see proof of Theorem 4 in \cite{Sudhir_TR}). We note that $\tilde{G}(\lambda_s)$ is negative and increasing in interval $[0,\lambda_s^{(3)}]$. So,  $\tilde{G}(\lambda_s^i) = \tilde{G}(\lambda_s^{(4)})\leq \tilde{G}(\lambda_s^{(3)}) = 0 $ and hence $v_1^i\leq 0$ for $S_p \in I^-\cup I $. Now, it follows from Equation (\ref{eqn:22}) that $\dfrac{\partial O_2^*}{\partial S_p} \geq 0$.

The solution of optimization problem P1 is given by Theorem 1 of \cite{Sudhir_standard_style} for $S_p \in I$ with $\lambda_s^{(1)}$ as the optimal admission rate for secondary class customers. Hence, $\lambda_s^f = \lambda_s^{(1)}$. $\lambda_s^{(1)}$ is the root of cubic $G(\lambda_s)$. From Equation (\ref{eqn:23}), $u_1^f = \dfrac{-c\lambda_p}{b} \leq 0$ for $S_p \in I$. 

Solution of problem P1 is given by Theorem 2 of \cite{Sudhir_standard_style} for $S_p \in I^-$ with $\lambda_s^{(2)} = \dfrac{\mu(\mu-\lambda_p)S_p}{\psi}-\lambda_p$ as the optimal admission rate for secondary class customers. Hence, $\lambda_s^f = \lambda_s^{(2)}$. Note that $\lambda_s^{(2)}$ linearly increases with $S_p$ and $\lambda_s^{(2)} = \lambda_s^{(1)}$ at $S_p = \dfrac{\psi (\lambda_p + \lambda_s^{(1)})}{\mu(\mu-\lambda_p)} = I_l$, the upper limit of interval $I^-$. Thus, $\lambda_s^{(2)} < \lambda_s^{(1)}$ for $S_p \in I^- \equiv (\hat{S}_p, I_l)$. $G(\lambda_s)$ is an increasing function of $\lambda_s \in (0, \lambda_s^{(1)})$ (see proof of claim 1 in \cite{Sudhir_TR}). This implies that $G(\lambda_s^{(2)}) \le G(\lambda_s^{(1)}) = 0$. From Equation (\ref{eqn:23}), $u_1^f \le 0$ for $S_p \in I^-$.

 Thus $u_1^f \le 0$ for $S_p \in I^-\cup I$ and we get the following from Equation (\ref{eqn:22}): 
\begin{equation}\label{eqn:25}
\dfrac{\partial O_1^*}{\partial S_p} \geq 0~\text{and}~\dfrac{\partial O_2^*}{\partial S_p} \geq 0
\end{equation}
Thus, $O_1^*$ and $O_2^*$ are increasing functions of $S_p$, in the interval $I^-\cup I$. Partial derivatives of Lagrangian multipliers with respect to $\lambda_s^f$ and $\lambda_s^i$ are shown to be positive \cite[page 25]{Sudhir_TR}:
\begin{equation} \label{eqn:dudvls}
\dfrac{\partial u_1^f}{\partial \lambda_s^f} \geq 0~\text{and}~\dfrac{\partial v_1^i}{\partial \lambda_s^i} \geq 0\\
\end{equation}
From Equation (\ref{eqn:22}):
\begin{equation}\label{eqn:27}
\dfrac{\partial^2 O_1^*}{\partial S_p^2} = -\dfrac{\partial u_1^f}{\partial S_p} = -\dfrac{\partial u_1^f}{\partial \lambda_s^f}\dfrac{\partial \lambda_s^f}{\partial S_p}
\end{equation}
Corollary 1 in \cite{Sudhir_standard_style} states that the mean arrival rate of secondary class customers $\lambda_s^{(1)}$ is independent of $S_p$ in interval $I$, so, $\dfrac{\partial \lambda_s^{(1)}}{\partial S_p} = 0$. But for $S_p \in I$, $\lambda_s^f = \lambda_s^{(1)}$ and hence we have
\begin{equation}\label{eqn:28}
\dfrac{\partial^2 O_1^*}{\partial S_p^2} =0 
\end{equation}
Consider Corollary 2 in \cite{Sudhir_standard_style} which states that the mean arrival rate of secondary class customers $\lambda_s^{(2)}$ is linearly increasing function of $S_p$ in interval $I^-$, i.e., $\dfrac{\partial \lambda_s^{(2)}}{\partial S_p} > 0$. But for $S_p \in I^-$, $\lambda_s^f = \lambda_s^{(2)}$ and we get the following from Equation (\ref{eqn:27}),
\begin{equation}\label{eqn:29}
\dfrac{\partial^2 O_1^*}{\partial S_p^2} \leq 0 
\end{equation}

By using equations (\ref{eqn:25}), (\ref{eqn:28}) and (\ref{eqn:29}), we can say that $O_1^*$ is a linearly increasing function of $S_p$, in the interval $I$, while it is an increasing concave function of $S_p$ in the interval $I^-$. 
For $S_p \in I^- \cup I$ with $\beta$ as infinity, solution of problem P2 is given by Theorem 4 of \cite{Sudhir_standard_style} with $\lambda_s^{(4)}$ as the optimal admission rate for secondary class customers. Hence, $\lambda_s^i = \lambda_s^{(4)}$. From Equation (\ref{eqn:22}), we have
\begin{equation}\label{eqn:d2O2sp}
\dfrac{\partial^2 O_2^*}{\partial S_p^2} = -\dfrac{\partial v_1^i}{\partial S_p} = -\dfrac{\partial v_1^i}{\partial \lambda_s^i}\dfrac{\partial \lambda_s^i}{\partial S_p}
\end{equation}
Consider Corollary 3 in \cite{Sudhir_standard_style} which states that $\lambda_s^{(4)}$ is an increasing function of $S_p$ in the interval $J^-$. Since $I^-\cup I \subset J^-$,  $\dfrac{\partial \lambda_s^i}{\partial S_p} \geq 0$ for $S_p \in I^-\cup I $. So from equation (\ref{eqn:dudvls}) and (\ref{eqn:d2O2sp}), we have  
\begin{equation}
\dfrac{\partial^2 O_2^*}{\partial S_p^2} \leq 0~\text{for}~S_p\in I^-\cup I
\end{equation}
$O_2^*$ is, thus, an  increasing function of $S_p$ as $\dfrac{\partial O_2^*}{\partial S_p}\ge 0 $ and concave as $\dfrac{\partial^2 O_2^*}{\partial S_p^2}\le 0$ for $ S_p \in I^- \cup I$. Hence, the theorem follows.

\begin{figure}[htb]
  \centering
  \begin{minipage}[b]{0.4\textwidth}
\resizebox{1 \textwidth}{!}{
\ifx\du\undefined
  \newlength{\du}
\fi
\setlength{\du}{15\unitlength}
\begin{tikzpicture}
\pgftransformxscale{1.000000}
\pgftransformyscale{-1.000000}
\definecolor{dialinecolor}{rgb}{0.000000, 0.000000, 0.000000}
\pgfsetstrokecolor{dialinecolor}
\definecolor{dialinecolor}{rgb}{1.000000, 1.000000, 1.000000}
\pgfsetfillcolor{dialinecolor}
\pgfsetlinewidth{0.100000\du}
\pgfsetdash{}{0pt}
\pgfsetdash{}{0pt}
\pgfsetbuttcap
{
\definecolor{dialinecolor}{rgb}{0.000000, 0.000000, 0.000000}
\pgfsetfillcolor{dialinecolor}
\pgfsetarrowsend{stealth}
\definecolor{dialinecolor}{rgb}{0.000000, 0.000000, 0.000000}
\pgfsetstrokecolor{dialinecolor}
\draw (10.000000\du,12.150000\du)--(22.800000\du,12.100000\du);
}
\pgfsetlinewidth{0.100000\du}
\pgfsetdash{}{0pt}
\pgfsetdash{}{0pt}
\pgfsetbuttcap
{
\definecolor{dialinecolor}{rgb}{0.000000, 0.000000, 0.000000}
\pgfsetfillcolor{dialinecolor}
\pgfsetarrowsend{stealth}
\definecolor{dialinecolor}{rgb}{0.000000, 0.000000, 0.000000}
\pgfsetstrokecolor{dialinecolor}
\draw (10.050000\du,12.150000\du)--(10.050000\du,1.500000\du);
}
\pgfsetlinewidth{0.000000\du}
\pgfsetdash{{1.000000\du}{1.000000\du}}{0\du}
\pgfsetdash{{0.200000\du}{0.200000\du}}{0\du}
\pgfsetbuttcap
{
\definecolor{dialinecolor}{rgb}{0.000000, 0.000000, 0.000000}
\pgfsetfillcolor{dialinecolor}
\definecolor{dialinecolor}{rgb}{0.000000, 0.000000, 0.000000}
\pgfsetstrokecolor{dialinecolor}
\draw (14.150000\du,1.600000\du)--(14.200000\du,12.050000\du);
}
\pgfsetlinewidth{0.000000\du}
\pgfsetdash{{0.200000\du}{0.200000\du}}{0\du}
\pgfsetdash{{0.200000\du}{0.200000\du}}{0\du}
\pgfsetbuttcap
{
\definecolor{dialinecolor}{rgb}{0.000000, 0.000000, 0.000000}
\pgfsetfillcolor{dialinecolor}
\definecolor{dialinecolor}{rgb}{0.000000, 0.000000, 0.000000}
\pgfsetstrokecolor{dialinecolor}
\draw (20.965239\du,1.640239\du)--(21.015239\du,12.090239\du);
}
\pgfsetlinewidth{0.100000\du}
\pgfsetdash{}{0pt}
\pgfsetdash{}{0pt}
\pgfsetbuttcap
{
\definecolor{dialinecolor}{rgb}{0.000000, 0.000000, 0.000000}
\pgfsetfillcolor{dialinecolor}
\definecolor{dialinecolor}{rgb}{0.000000, 0.000000, 0.000000}
\pgfsetstrokecolor{dialinecolor}
\draw (14.150000\du,6.300000\du)--(20.950000\du,4.350000\du);
}
\pgfsetlinewidth{0.100000\du}
\pgfsetdash{}{0pt}
\pgfsetdash{}{0pt}
\pgfsetbuttcap
{
\definecolor{dialinecolor}{rgb}{0.000000, 0.000000, 0.000000}
\pgfsetfillcolor{dialinecolor}
\definecolor{dialinecolor}{rgb}{0.000000, 0.000000, 0.000000}
\pgfsetstrokecolor{dialinecolor}
\pgfpathmoveto{\pgfpoint{14.250279\du}{6.199628\du}}
\pgfpatharc{217}{184}{11.103314\du and 11.103314\du}
\pgfusepath{stroke}
}
\pgfsetlinewidth{0.100000\du}
\pgfsetdash{}{0pt}
\pgfsetdash{}{0pt}
\pgfsetmiterjoin
\pgfsetbuttcap
{
\definecolor{dialinecolor}{rgb}{0.000000, 0.000000, 0.000000}
\pgfsetfillcolor{dialinecolor}
\definecolor{dialinecolor}{rgb}{0.000000, 0.000000, 1.000000}
\pgfsetstrokecolor{dialinecolor}
\pgfpathmoveto{\pgfpoint{12.000000\du}{12.000000\du}}
\pgfpathcurveto{\pgfpoint{12.500000\du}{8.700000\du}}{\pgfpoint{17.800000\du}{5.050000\du}}{\pgfpoint{20.750000\du}{4.450000\du}}
\pgfusepath{stroke}
}
\pgfsetlinewidth{0.050000\du}
\pgfsetdash{}{0pt}
\pgfsetdash{}{0pt}
\pgfsetbuttcap
{
\definecolor{dialinecolor}{rgb}{0.000000, 0.000000, 0.000000}
\pgfsetfillcolor{dialinecolor}
\pgfsetarrowsstart{to}
\pgfsetarrowsend{to}
\definecolor{dialinecolor}{rgb}{0.000000, 0.000000, 0.000000}
\pgfsetstrokecolor{dialinecolor}
\draw (12.150000\du,13.500000\du)--(14.200000\du,13.500000\du);
}
\pgfsetlinewidth{0.050000\du}
\pgfsetdash{}{0pt}
\pgfsetdash{}{0pt}
\pgfsetbuttcap
{
\definecolor{dialinecolor}{rgb}{0.000000, 0.000000, 0.000000}
\pgfsetfillcolor{dialinecolor}
\pgfsetarrowsstart{to}
\pgfsetarrowsend{to}
\definecolor{dialinecolor}{rgb}{0.000000, 0.000000, 0.000000}
\pgfsetstrokecolor{dialinecolor}
\draw (14.600000\du,13.450000\du)--(21.250000\du,13.450000\du);
}
\definecolor{dialinecolor}{rgb}{0.000000, 0.000000, 0.000000}
\pgfsetstrokecolor{dialinecolor}
\node[anchor=west] at (11.4500000\du,12.900000\du){$\hat{S}_p$};
\definecolor{dialinecolor}{rgb}{0.000000, 0.000000, 0.000000}
\pgfsetstrokecolor{dialinecolor}
\node[anchor=west] at (13.80000\du,12.900000\du){$I_l$};
\definecolor{dialinecolor}{rgb}{0.000000, 0.000000, 0.000000}
\pgfsetstrokecolor{dialinecolor}
\node[anchor=west] at (20.700000\du,12.800000\du){$I_u$};
\definecolor{dialinecolor}{rgb}{0.000000, 0.000000, 0.000000}
\pgfsetstrokecolor{dialinecolor}
\node[anchor=west] at (23.200000\du,12.450000\du){$S_p$};
\definecolor{dialinecolor}{rgb}{0.000000, 0.000000, 0.000000}
\pgfsetstrokecolor{dialinecolor}
\node[anchor=west] at (8.34680000\du,1.200000\du){$O_1^*$ or $O_2^*$};
\definecolor{dialinecolor}{rgb}{0.000000, 0.000000, 0.000000}
\pgfsetstrokecolor{dialinecolor}
\node[anchor=west] at (14.46900000\du,3.820000\du){$O_1^*(\lambda_s^{(1)}, \beta^{(1)})$};
\pgfsetlinewidth{0.050000\du}
\pgfsetdash{}{0pt}
\pgfsetdash{}{0pt}
\pgfsetbuttcap
{
\definecolor{dialinecolor}{rgb}{0.000000, 0.000000, 0.000000}
\pgfsetfillcolor{dialinecolor}
\pgfsetarrowsend{to}
\definecolor{dialinecolor}{rgb}{0.000000, 0.000000, 0.000000}
\pgfsetstrokecolor{dialinecolor}
\draw (16.200000\du,4.300000\du)--(15.450000\du,5.650000\du);
}
\definecolor{dialinecolor}{rgb}{0.000000, 0.000000, 0.000000}
\pgfsetstrokecolor{dialinecolor}
\node[anchor=west] at (10.000000\du,6.00000\du){$O_1^*(\lambda_s^{(2)}, 0)$};
\pgfsetlinewidth{0.050000\du}
\pgfsetdash{}{0pt}
\pgfsetdash{}{0pt}
\pgfsetbuttcap
{
\definecolor{dialinecolor}{rgb}{0.000000, 0.000000, 0.000000}
\pgfsetfillcolor{dialinecolor}
\pgfsetarrowsend{to}
\definecolor{dialinecolor}{rgb}{0.000000, 0.000000, 0.000000}
\pgfsetstrokecolor{dialinecolor}
\draw (11.700000\du,6.750000\du)--(12.650000\du,8.300000\du);
}
\definecolor{dialinecolor}{rgb}{0.000000, 0.000000, 0.000000}
\pgfsetstrokecolor{dialinecolor}
\node[anchor=west] at (13.000000\du,14.300000\du){$I^-$};
\definecolor{dialinecolor}{rgb}{0.000000, 0.000000, 0.000000}
\pgfsetstrokecolor{dialinecolor}
\node[anchor=west] at (17.800000\du,14.300000\du){$I$};
\pgfsetlinewidth{0.050000\du}
\pgfsetdash{}{0pt}
\pgfsetdash{}{0pt}
\pgfsetbuttcap
{
\definecolor{dialinecolor}{rgb}{0.000000, 0.000000, 0.000000}
\pgfsetfillcolor{dialinecolor}
\pgfsetarrowsend{to}
\definecolor{dialinecolor}{rgb}{0.000000, 0.000000, 0.000000}
\pgfsetstrokecolor{dialinecolor}
\draw (16.750000\du,9.100000\du)--(15.700000\du,8.000000\du);
}
\definecolor{dialinecolor}{rgb}{0.000000, 0.000000, 0.000000}
\pgfsetstrokecolor{dialinecolor}
\node[anchor=west] at (15.0000\du,9.800000\du){$O_2^*(\lambda_s^{(4)}, \beta = \infty)$};
\end{tikzpicture}}
\caption{No contradiction: {this is the only feasible scenario and conjecture follows from the fact that $O_2^*$ is smaller than $O_1^*$ for $S_p \in I^-$} } \label{fig:nocontd}
  \end{minipage}\hfill
  \begin{minipage}[b]{0.4\textwidth}
	\resizebox{1 \textwidth}{!}{
\ifx\du\undefined
  \newlength{\du}
\fi
\setlength{\du}{15\unitlength}
\begin{tikzpicture}
\pgftransformxscale{1.000000}
\pgftransformyscale{-1.000000}
\definecolor{dialinecolor}{rgb}{0.000000, 0.000000, 0.000000}
\pgfsetstrokecolor{dialinecolor}
\definecolor{dialinecolor}{rgb}{1.000000, 1.000000, 1.000000}
\pgfsetfillcolor{dialinecolor}
\pgfsetlinewidth{0.100000\du}
\pgfsetdash{}{0pt}
\pgfsetdash{}{0pt}
\pgfsetbuttcap
{
\definecolor{dialinecolor}{rgb}{0.000000, 0.000000, 0.000000}
\pgfsetfillcolor{dialinecolor}
\pgfsetarrowsend{stealth}
\definecolor{dialinecolor}{rgb}{0.000000, 0.000000, 0.000000}
\pgfsetstrokecolor{dialinecolor}
\draw (18.900000\du,19.600000\du)--(36.450000\du,19.650000\du);
}
\pgfsetlinewidth{0.100000\du}
\pgfsetdash{}{0pt}
\pgfsetdash{}{0pt}
\pgfsetbuttcap
{
\definecolor{dialinecolor}{rgb}{0.000000, 0.000000, 0.000000}
\pgfsetfillcolor{dialinecolor}
\pgfsetarrowsend{stealth}
\definecolor{dialinecolor}{rgb}{0.000000, 0.000000, 0.000000}
\pgfsetstrokecolor{dialinecolor}
\draw (18.900000\du,19.550000\du)--(18.850000\du,6.500000\du);
}
\pgfsetlinewidth{0.100000\du}
\pgfsetdash{{1.000000\du}{1.000000\du}}{0\du}
\pgfsetdash{{0.400000\du}{0.400000\du}}{0\du}
\pgfsetbuttcap
{
\definecolor{dialinecolor}{rgb}{0.000000, 0.000000, 0.000000}
\pgfsetfillcolor{dialinecolor}
\definecolor{dialinecolor}{rgb}{0.000000, 0.000000, 0.000000}
\pgfsetstrokecolor{dialinecolor}
\draw (28.900000\du,19.550000\du)--(28.900000\du,8.650000\du);
}
\pgfsetlinewidth{0.100000\du}
\pgfsetdash{{0.400000\du}{0.400000\du}}{0\du}
\pgfsetdash{{0.400000\du}{0.400000\du}}{0\du}
\pgfsetbuttcap
{
\definecolor{dialinecolor}{rgb}{0.000000, 0.000000, 0.000000}
\pgfsetfillcolor{dialinecolor}
\definecolor{dialinecolor}{rgb}{0.000000, 0.000000, 0.000000}
\pgfsetstrokecolor{dialinecolor}
\draw (34.600000\du,19.600000\du)--(34.715000\du,8.605000\du);
}
\pgfsetlinewidth{0.100000\du}
\pgfsetdash{}{0pt}
\pgfsetdash{}{0pt}
\pgfsetbuttcap
{
\definecolor{dialinecolor}{rgb}{0.000000, 0.000000, 0.000000}
\pgfsetfillcolor{dialinecolor}
\definecolor{dialinecolor}{rgb}{0.000000, 0.000000, 0.000000}
\pgfsetstrokecolor{dialinecolor}
\pgfpathmoveto{\pgfpoint{28.900442\du}{12.199823\du}}
\pgfpatharc{249}{205}{14.555450\du and 14.555450\du}
\pgfusepath{stroke}
}
\pgfsetlinewidth{0.100000\du}
\pgfsetdash{}{0pt}
\pgfsetdash{}{0pt}
\pgfsetmiterjoin
\pgfsetbuttcap
{
\definecolor{dialinecolor}{rgb}{0.000000, 0.000000, 1.000000}
\pgfsetfillcolor{dialinecolor}
\definecolor{dialinecolor}{rgb}{0.000000, 0.000000, 1.000000}
\pgfsetstrokecolor{dialinecolor}
\pgfpathmoveto{\pgfpoint{21.050000\du}{19.600000\du}}
\pgfpathcurveto{\pgfpoint{21.350000\du}{14.800000\du}}{\pgfpoint{30.200000\du}{12.150000\du}}{\pgfpoint{34.600000\du}{11.550000\du}}
\pgfusepath{stroke}
}
\pgfsetlinewidth{0.100000\du}
\pgfsetdash{}{0pt}
\pgfsetdash{}{0pt}
\pgfsetbuttcap
{
\definecolor{dialinecolor}{rgb}{0.000000, 0.000000, 0.000000}
\pgfsetfillcolor{dialinecolor}
\definecolor{dialinecolor}{rgb}{0.000000, 0.000000, 0.000000}
\pgfsetstrokecolor{dialinecolor}
\draw (28.950000\du,12.250000\du)--(34.800000\du,11.500000\du);
}
\definecolor{dialinecolor}{rgb}{0.000000, 0.000000, 0.000000}
\pgfsetstrokecolor{dialinecolor}
\node[anchor=west] at (17.300000\du,6.100000\du){\large{$O_1^*$ or $O_2^*$}};
\definecolor{dialinecolor}{rgb}{0.000000, 0.000000, 0.000000}
\pgfsetstrokecolor{dialinecolor}
\node[anchor=west] at (37.250000\du,19.900000\du){\large{$S_p$}};
\definecolor{dialinecolor}{rgb}{0.000000, 0.000000, 0.000000}
\pgfsetstrokecolor{dialinecolor}
\node[anchor=west] at (20.800000\du,20.450000\du){\large{$\hat{S}_p$}};
\definecolor{dialinecolor}{rgb}{0.000000, 0.000000, 0.000000}
\pgfsetstrokecolor{dialinecolor}
\node[anchor=west] at (28.700000\du,20.450000\du){\large{$I_l$}};
\definecolor{dialinecolor}{rgb}{0.000000, 0.000000, 0.000000}
\pgfsetstrokecolor{dialinecolor}
\node[anchor=west] at (34.450000\du,20.300000\du){\large{$I_u$}};
\pgfsetlinewidth{0.050000\du}
\pgfsetdash{}{0pt}
\pgfsetdash{}{0pt}
\pgfsetbuttcap
{
\definecolor{dialinecolor}{rgb}{0.000000, 0.000000, 0.000000}
\pgfsetfillcolor{dialinecolor}
\pgfsetarrowsstart{to}
\pgfsetarrowsend{to}
\definecolor{dialinecolor}{rgb}{0.000000, 0.000000, 0.000000}
\pgfsetstrokecolor{dialinecolor}
\draw (21.100000\du,21.000000\du)--(28.850000\du,21.000000\du);
}
\pgfsetlinewidth{0.050000\du}
\pgfsetdash{}{0pt}
\pgfsetdash{}{0pt}
\pgfsetbuttcap
{
\definecolor{dialinecolor}{rgb}{0.000000, 0.000000, 0.000000}
\pgfsetfillcolor{dialinecolor}
\pgfsetarrowsstart{to}
\pgfsetarrowsend{to}
\definecolor{dialinecolor}{rgb}{0.000000, 0.000000, 0.000000}
\pgfsetstrokecolor{dialinecolor}
\draw (29.290000\du,21.045451\du)--(34.250000\du,21.050000\du);
}
\definecolor{dialinecolor}{rgb}{0.000000, 0.000000, 0.000000}
\pgfsetstrokecolor{dialinecolor}
\node[anchor=west] at (24.150000\du,21.950000\du){\large{$I^-$}};
\definecolor{dialinecolor}{rgb}{0.000000, 0.000000, 0.000000}
\pgfsetstrokecolor{dialinecolor}
\node[anchor=west] at (31.750000\du,22.050000\du){\large{$I$}};
\pgfsetlinewidth{0.050000\du}
\pgfsetdash{}{0pt}
\pgfsetdash{}{0pt}
\pgfsetbuttcap
{
\definecolor{dialinecolor}{rgb}{0.000000, 0.000000, 0.000000}
\pgfsetfillcolor{dialinecolor}
\pgfsetarrowsend{to}
\definecolor{dialinecolor}{rgb}{0.000000, 0.000000, 0.000000}
\pgfsetstrokecolor{dialinecolor}
\draw (23.900000\du,10.800000\du)--(25.750000\du,13.500000\du);
}
\definecolor{dialinecolor}{rgb}{0.000000, 0.000000, 0.000000}
\pgfsetstrokecolor{dialinecolor}
\node[anchor=west] at (21.50000\du,10.300000\du){\large{$ O_1^*(\lambda_s^{(2)}, 0)$}};
\definecolor{dialinecolor}{rgb}{0.000000, 0.000000, 0.000000}
\pgfsetstrokecolor{dialinecolor}
\node[anchor=west] at (29.250000\du,16.550000\du){\large{$ O_2^*(\lambda_s^{(4)}, \beta = \infty)$}};
\pgfsetlinewidth{0.050000\du}
\pgfsetdash{}{0pt}
\pgfsetdash{}{0pt}
\pgfsetbuttcap
{
\definecolor{dialinecolor}{rgb}{0.000000, 0.000000, 0.000000}
\pgfsetfillcolor{dialinecolor}
\pgfsetarrowsend{to}
\definecolor{dialinecolor}{rgb}{0.000000, 0.000000, 0.000000}
\pgfsetstrokecolor{dialinecolor}
\draw (29.700000\du,16.050000\du)--(27.899754\du,13.639754\du);
}
\pgfsetlinewidth{0.050000\du}
\pgfsetdash{}{0pt}
\pgfsetdash{}{0pt}
\pgfsetbuttcap
{
\definecolor{dialinecolor}{rgb}{0.000000, 0.000000, 0.000000}
\pgfsetfillcolor{dialinecolor}
\pgfsetarrowsend{to}
\definecolor{dialinecolor}{rgb}{0.000000, 0.000000, 0.000000}
\pgfsetstrokecolor{dialinecolor}
\draw (31.550000\du,10.050000\du)--(30.314754\du,11.744754\du);
}
\definecolor{dialinecolor}{rgb}{0.000000, 0.000000, 0.000000}
\pgfsetstrokecolor{dialinecolor}
\node[anchor=west] at (30.115000\du,9.650000\du){\large{$ O_1^*(\lambda_s^{(1)}, \beta^{(1)})$}};
\end{tikzpicture}}
\caption{{ Slope of $O_2^*$ is higher than that of $O_1^*$ near $\hat{S}_p$ but both the slopes are positive.} Contradiction by $O_2^*(\lambda_s^i,\infty) < O_1^*(\lambda_s^f,\beta^f)$ at $\hat{S}_p+\epsilon$}\label{fig:2cuts}
  \end{minipage}
\end{figure}

\end{proof}

{
 The optimal objective of problem P1 is more than that of P2 for $S_p \in I$, i.e., $O_1^*(\lambda_s^f,\beta^f) > O_2^*(\lambda_s^i,\beta^i)$ (see \cite[page 26]{Sudhir_TR}). Thus, the optimal solution of P0 is given by that of P1 for $S_p \in I$. {The concave increasing nature of $O_2^*$ ensures that $O_2^*$ cannot cross $O_1^*$ more than once as this results in a non-concave function (See figure \ref{fig:non_concave}, $O_1^*$ and $O_2^*$ are in black and blue colors respectively in all subsequent figures).  }

The following (Theorem 2) result determines the nature of {{}the} optimal objective of problem P0 in service level range $I^-$ by exploiting the increasing concave nature of optimal objective function of problem P2 as identified in {Theorem 1}. {The conjecture follows from Theorem 2.}

\begin{thm}
The optimal solution of P0 is given by {{}the} optimal solution of P1 for $S_p \in I^-$.
\end{thm}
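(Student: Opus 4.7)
The plan is a proof by contradiction that exploits the concavity of $O_2^*$ established in Theorem~1. Suppose to the contrary that $O_2^*(S_0) > O_1^*(S_0)$ for some $S_0 \in I^-$.

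First, I would pin down the common boundary behavior at $\hat{S}_p$: both $O_1^*(\hat{S}_p) = 0$ and $O_2^*(\hat{S}_p) = 0$, because at $S_p = \hat{S}_p$ the primary waiting-time bound is tight even with $\lambda_s = 0$, forcing $\lambda_s^{(2)}(\hat{S}_p) = \lambda_s^{(4)}(\hat{S}_p) = 0$ and hence zero revenue in both P1 and P2. I would also record a useful consequence of Theorem~1: since the left- and right-derivatives of $O_1^*$ at $I_l$ coincide (both equal $c\lambda_p/b$, by Equation~(\ref{eqn:23}) evaluated at $\lambda_s^{(1)}$), the piecewise-defined $O_1^*$ is in fact concave on the whole range $I^- \cup I$, not just on $I^-$.

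Next, I would invoke the cited fact $O_1^* > O_2^*$ on $I$. By continuity of both objectives at $I_l$, this strict inequality extends to a left neighborhood $(I_l - \epsilon, I_l]$ within $I^-$ for some $\epsilon > 0$. Combined with the standing hypothesis $O_2^*(S_0) > O_1^*(S_0)$, the intermediate value theorem applied to $\phi(S) = O_2^*(S) - O_1^*(S)$ produces a re-crossing $S^\star \in (S_0,\, I_l - \epsilon]$ with $\phi(S^\star) = 0$. Thus, on $[\hat{S}_p, I_u)$ the sign pattern of $\phi$ is: zero at $\hat{S}_p$, strictly positive at $S_0$, zero at $S^\star$, and strictly negative on $(S^\star, I_u)$.

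The contradiction then should emerge by applying chord-slope monotonicity of the concave $O_2^*$ to the three ordered points $\hat{S}_p < S^\star < S_3$ (for arbitrary $S_3 \in I$), substituting $O_2^*(\hat{S}_p) = 0$ and $O_2^*(S^\star) = O_1^*(S^\star)$, and combining with $O_2^*(S_3) < O_1^*(S_3)$ and the linear form $O_1^*(S_3) = O_1^*(I_l) + (c\lambda_p/b)(S_3 - I_l)$. The analogous chord-slope bound from the concavity of $O_1^*$ on $I^- \cup I$ and its matched slope at $I_l$ would pin the quantities tightly enough that an extra application of the chord-slope inequality to the triple $\hat{S}_p < S_0 < S^\star$ forces $O_2^*(S_0) \leq O_1^*(S_0)$, contradicting the standing hypothesis. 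Geometrically, as in Figure~\ref{fig:non_concave}, the assumed ``bump'' of $O_2^*$ above $O_1^*$ inside $I^-$ would have to be flattened to keep $O_2^*$ below $O_1^*$ on $I$, and this is incompatible with the non-increasing secant slopes of a concave function anchored at zero at $\hat{S}_p$.

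The principal obstacle is that $\phi = O_2^* - O_1^*$ is a difference of two concave functions and so need not be concave itself; one cannot reason about its sign changes directly via concavity. The argument must instead use chord-slope monotonicity of $O_2^*$ alone, anchored at the common vanishing point $\hat{S}_p$, and invoke $O_1^*$ only through its rigid structure (concave on $I^-$, linear on $I$ with the explicit slope $c\lambda_p/b$, and smooth at $I_l$) to make the resulting secant-slope comparisons sharp enough to yield a contradiction.
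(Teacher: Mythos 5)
Your plan assembles most of the ingredients the paper uses (concavity of $O_2^*$ from Theorem 1, the concave-then-linear structure of $O_1^*$, the known dominance $O_1^*>O_2^*$ on $I$, and the common boundary value $O_1^*(\hat{S}_p)=O_2^*(\hat{S}_p)=0$, which is correct, as is your observation via Equation (\ref{eqn:23}) that the one-sided derivatives of $O_1^*$ match at $I_l$). However, the step where the contradiction is supposed to ``emerge'' does not go through, and you have in effect flagged the reason yourself without resolving it. Chord-slope monotonicity anchored at the common zero only says that the secant slopes $O_i^*(S)/(S-\hat{S}_p)$ are each non-increasing; the sign pattern you derive (the slope of $O_2^*$'s secant exceeding that of $O_1^*$'s at $S_0$ and falling below it at every $S_3\in I$) is perfectly consistent with both monotonicities, so no contradiction follows. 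Moreover, concavity of $O_2^*$ anchored at $\hat{S}_p$ and $S^\star$ yields only \emph{lower} bounds on $O_2^*(S_0)$ at the interior point $S_0$, whereas the contradiction requires an \emph{upper} bound; the backward-extrapolated secant through $S^\star$ and $S_3$ does give an upper bound, but its slope is \emph{smaller} than the relevant secant slope of $O_1^*$, so the inequality points the wrong way. A concrete counterexample at the level of generality you allow yourself: take $\hat{S}_p=0$, $I_l=1$, $I_u=2$, $O_1^*(x)=\sqrt{x}$ on $[0,1]$ extended linearly with slope $1/2$, and $O_2^*$ equal to $2x$ on $[0,0.45]$ and $0.9+0.1(x-0.45)$ thereafter. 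Both are increasing, concave, and vanish at $0$; $O_1^*$ is linear on $I$; $O_2^*<O_1^*$ throughout $I$; yet $O_2^*(0.45)=0.9>\sqrt{0.45}\approx 0.67=O_1^*(0.45)$. So the hypotheses you invoke cannot imply the theorem.

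The ingredient you are missing is the one the paper imports from the technical report to exclude the scenario of Figure \ref{fig:2cuts}: the strict local dominance $O_2^*<O_1^*$ at $\hat{S}_p+\epsilon$ for small $\epsilon>0$, a genuinely quantitative comparison of the two optimization problems near the left endpoint that is not a consequence of concavity and the common boundary value. The paper's proof is a four-case geometric analysis (Figures \ref{fig:nocontd}--\ref{fig:infeasible}) that combines this local fact with the dominance on $I$ and with the assertion that a concave $O_2^*$ cannot cross the concave-then-linear $O_1^*$ twice; your route replaces the local fact by the common vanishing at $\hat{S}_p$, and that substitution is exactly what breaks. To repair the argument you would need to prove or cite the comparison of the two objectives (equivalently, of the Lagrange multipliers $-u_1^f$ and $-v_1^i$) in a right-neighborhood of $\hat{S}_p$, and even then the single-crossing step deserves a more careful justification than bare concavity provides.
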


\begin{proof}

The optimal objective of problem P1, $O_1^*$, is increasing concave in $I^-$ and linearly increasing in $I$ as stated in Theorem 1; {{}$O_2^*$ cannot cross $O_1^*$ more than once to ensure concavity as discussed above.} Thus, four scenarios can arise for optimal objective of problem P2, $O_2^*$, for service level range $I^-$ as shown in Figures \ref{fig:nocontd}, \ref{fig:2cuts}, \ref{fig:1cut}, and \ref{fig:infeasible} depending on the slope of $O_1^*$ and $O_2^*$ near $\hat{S}_p$.

\begin{figure}[htbp]
  \centering
  \begin{minipage}[b]{0.4\textwidth}
\resizebox{1 \textwidth}{!}{
\ifx\du\undefined
  \newlength{\du}
\fi
\setlength{\du}{15\unitlength}
\begin{tikzpicture}
\pgftransformxscale{1.000000}
\pgftransformyscale{-1.000000}
\definecolor{dialinecolor}{rgb}{0.000000, 0.000000, 0.000000}
\pgfsetstrokecolor{dialinecolor}
\definecolor{dialinecolor}{rgb}{1.000000, 1.000000, 1.000000}
\pgfsetfillcolor{dialinecolor}
\pgfsetlinewidth{0.100000\du}
\pgfsetdash{}{0pt}
\pgfsetdash{}{0pt}
\pgfsetbuttcap
{
\definecolor{dialinecolor}{rgb}{0.000000, 0.000000, 0.000000}
\pgfsetfillcolor{dialinecolor}
\pgfsetarrowsend{stealth}
\definecolor{dialinecolor}{rgb}{0.000000, 0.000000, 0.000000}
\pgfsetstrokecolor{dialinecolor}
\draw (6.265000\du,15.535000\du)--(19.065000\du,15.485000\du);
}
\pgfsetlinewidth{0.100000\du}
\pgfsetdash{}{0pt}
\pgfsetdash{}{0pt}
\pgfsetbuttcap
{
\definecolor{dialinecolor}{rgb}{0.000000, 0.000000, 0.000000}
\pgfsetfillcolor{dialinecolor}
\pgfsetarrowsend{stealth}
\definecolor{dialinecolor}{rgb}{0.000000, 0.000000, 0.000000}
\pgfsetstrokecolor{dialinecolor}
\draw (6.315000\du,15.535000\du)--(6.315000\du,4.885000\du);
}
\pgfsetlinewidth{0.000000\du}
\pgfsetdash{{1.000000\du}{1.000000\du}}{0\du}
\pgfsetdash{{0.200000\du}{0.200000\du}}{0\du}
\pgfsetbuttcap
{
\definecolor{dialinecolor}{rgb}{0.000000, 0.000000, 0.000000}
\pgfsetfillcolor{dialinecolor}
\definecolor{dialinecolor}{rgb}{0.000000, 0.000000, 0.000000}
\pgfsetstrokecolor{dialinecolor}
\draw (10.415000\du,4.985000\du)--(10.465000\du,15.435000\du);
}
\pgfsetlinewidth{0.000000\du}
\pgfsetdash{{0.200000\du}{0.200000\du}}{0\du}
\pgfsetdash{{0.200000\du}{0.200000\du}}{0\du}
\pgfsetbuttcap
{
\definecolor{dialinecolor}{rgb}{0.000000, 0.000000, 0.000000}
\pgfsetfillcolor{dialinecolor}
\definecolor{dialinecolor}{rgb}{0.000000, 0.000000, 0.000000}
\pgfsetstrokecolor{dialinecolor}
\draw (17.230239\du,5.025239\du)--(17.280239\du,15.475239\du);
}
\pgfsetlinewidth{0.100000\du}
\pgfsetdash{}{0pt}
\pgfsetdash{}{0pt}
\pgfsetbuttcap
{
\definecolor{dialinecolor}{rgb}{0.000000, 0.000000, 0.000000}
\pgfsetfillcolor{dialinecolor}
\definecolor{dialinecolor}{rgb}{0.000000, 0.000000, 0.000000}
\pgfsetstrokecolor{dialinecolor}
\draw (10.415000\du,9.685000\du)--(17.215000\du,7.735000\du);
}
\pgfsetlinewidth{0.100000\du}
\pgfsetdash{}{0pt}
\pgfsetdash{}{0pt}
\pgfsetbuttcap
{
\definecolor{dialinecolor}{rgb}{0.000000, 0.000000, 0.000000}
\pgfsetfillcolor{dialinecolor}
\definecolor{dialinecolor}{rgb}{0.000000, 0.000000, 0.000000}
\pgfsetstrokecolor{dialinecolor}
\pgfpathmoveto{\pgfpoint{10.515279\du}{9.584628\du}}
\pgfpatharc{217}{184}{11.103314\du and 11.103314\du}
\pgfusepath{stroke}
}
\pgfsetlinewidth{0.100000\du}
\pgfsetdash{}{0pt}
\pgfsetdash{}{0pt}
\pgfsetmiterjoin
\pgfsetbuttcap
{
\definecolor{dialinecolor}{rgb}{0.000000, 0.000000, 0.000000}
\pgfsetfillcolor{dialinecolor}
\definecolor{dialinecolor}{rgb}{0.000000, 0.000000, 1.000000}
\pgfsetstrokecolor{dialinecolor}
\pgfpathmoveto{\pgfpoint{8.265000\du}{15.385000\du}}
\pgfpathcurveto{\pgfpoint{5.200000\du}{13.450000\du}}{\pgfpoint{14.065000\du}{8.435000\du}}{\pgfpoint{17.015000\du}{7.835000\du}}
\pgfusepath{stroke}
}
\pgfsetlinewidth{0.050000\du}
\pgfsetdash{}{0pt}
\pgfsetdash{}{0pt}
\pgfsetbuttcap
{
\definecolor{dialinecolor}{rgb}{0.000000, 0.000000, 0.000000}
\pgfsetfillcolor{dialinecolor}
\pgfsetarrowsstart{to}
\pgfsetarrowsend{to}
\definecolor{dialinecolor}{rgb}{0.000000, 0.000000, 0.000000}
\pgfsetstrokecolor{dialinecolor}
\draw (8.415000\du,16.885000\du)--(10.465000\du,16.885000\du);
}
\pgfsetlinewidth{0.050000\du}
\pgfsetdash{}{0pt}
\pgfsetdash{}{0pt}
\pgfsetbuttcap
{
\definecolor{dialinecolor}{rgb}{0.000000, 0.000000, 0.000000}
\pgfsetfillcolor{dialinecolor}
\pgfsetarrowsstart{to}
\pgfsetarrowsend{to}
\definecolor{dialinecolor}{rgb}{0.000000, 0.000000, 0.000000}
\pgfsetstrokecolor{dialinecolor}
\draw (10.865000\du,16.835000\du)--(17.515000\du,16.835000\du);
}
\definecolor{dialinecolor}{rgb}{0.000000, 0.000000, 0.000000}
\pgfsetstrokecolor{dialinecolor}
\node[anchor=west] at (7.865000\du,16.285000\du){$\hat{S}_p$};
\definecolor{dialinecolor}{rgb}{0.000000, 0.000000, 0.000000}
\pgfsetstrokecolor{dialinecolor}
\node[anchor=west] at (10.415000\du,16.285000\du){$I_l$};
\definecolor{dialinecolor}{rgb}{0.000000, 0.000000, 0.000000}
\pgfsetstrokecolor{dialinecolor}
\node[anchor=west] at (17.065000\du,16.185000\du){$I_u$};
\definecolor{dialinecolor}{rgb}{0.000000, 0.000000, 0.000000}
\pgfsetstrokecolor{dialinecolor}
\node[anchor=west] at (19.465000\du,15.835000\du){$S_p$};
\definecolor{dialinecolor}{rgb}{0.000000, 0.000000, 0.000000}
\pgfsetstrokecolor{dialinecolor}
\node[anchor=west] at (4.58000\du,4.585000\du){$O_1^*$ or $O_2^*$};
\definecolor{dialinecolor}{rgb}{0.000000, 0.000000, 0.000000}
\pgfsetstrokecolor{dialinecolor}
\node[anchor=west] at (11.865000\du,6.8435000\du){$O_1^*(\lambda_s^{(1)}, \beta^{(1)})$};
\pgfsetlinewidth{0.050000\du}
\pgfsetdash{}{0pt}
\pgfsetdash{}{0pt}
\pgfsetbuttcap
{
\definecolor{dialinecolor}{rgb}{0.000000, 0.000000, 0.000000}
\pgfsetfillcolor{dialinecolor}
\pgfsetarrowsend{to}
\definecolor{dialinecolor}{rgb}{0.000000, 0.000000, 0.000000}
\pgfsetstrokecolor{dialinecolor}
\draw (12.465000\du,7.685000\du)--(11.715000\du,9.035000\du);
}
\definecolor{dialinecolor}{rgb}{0.000000, 0.000000, 0.000000}
\pgfsetstrokecolor{dialinecolor}
\node[anchor=west] at (6.5000\du,9.2000\du){$O_1^*(\lambda_s^{(2)}, 0)$};
\pgfsetlinewidth{0.050000\du}
\pgfsetdash{}{0pt}
\pgfsetdash{}{0pt}
\pgfsetbuttcap
{
\definecolor{dialinecolor}{rgb}{0.000000, 0.000000, 0.000000}
\pgfsetfillcolor{dialinecolor}
\pgfsetarrowsend{to}
\definecolor{dialinecolor}{rgb}{0.000000, 0.000000, 0.000000}
\pgfsetstrokecolor{dialinecolor}
\draw (7.965000\du,10.135000\du)--(8.915000\du,11.685000\du);
}
\definecolor{dialinecolor}{rgb}{0.000000, 0.000000, 0.000000}
\pgfsetstrokecolor{dialinecolor}
\node[anchor=west] at (9.265000\du,17.685000\du){$I^-$};
\definecolor{dialinecolor}{rgb}{0.000000, 0.000000, 0.000000}
\pgfsetstrokecolor{dialinecolor}
\node[anchor=west] at (14.065000\du,17.685000\du){$I$};
\pgfsetlinewidth{0.050000\du}
\pgfsetdash{}{0pt}
\pgfsetdash{}{0pt}
\pgfsetbuttcap
{
\definecolor{dialinecolor}{rgb}{0.000000, 0.000000, 0.000000}
\pgfsetfillcolor{dialinecolor}
\pgfsetarrowsend{to}
\definecolor{dialinecolor}{rgb}{0.000000, 0.000000, 0.000000}
\pgfsetstrokecolor{dialinecolor}
\draw (13.015000\du,12.485000\du)--(11.965000\du,11.385000\du);
}
\definecolor{dialinecolor}{rgb}{0.000000, 0.000000, 0.000000}
\pgfsetstrokecolor{dialinecolor}
\node[anchor=west] at (11.000\du,13.185000\du){$O_2^*(\lambda_s^{(4)}, \beta = \infty)$};
\end{tikzpicture}}
\caption{{ Slope of $O_2^*$ is negative and higher than that of $O_1^*$ near $\hat{S}_p$. Contradiction by infeasibility of $O_1^*$ (equivalently problem P1) for $S_p < \hat{S}_p$.} }\label{fig:1cut}
  \end{minipage}\hfill
  \begin{minipage}[b]{0.4\textwidth}
	\resizebox{1 \textwidth}{!}{
\ifx\du\undefined
  \newlength{\du}
\fi
\setlength{\du}{15\unitlength}
\begin{tikzpicture}
\pgftransformxscale{1.000000}
\pgftransformyscale{-1.000000}
\definecolor{dialinecolor}{rgb}{0.000000, 0.000000, 0.000000}
\pgfsetstrokecolor{dialinecolor}
\definecolor{dialinecolor}{rgb}{1.000000, 1.000000, 1.000000}
\pgfsetfillcolor{dialinecolor}
\pgfsetlinewidth{0.100000\du}
\pgfsetdash{}{0pt}
\pgfsetdash{}{0pt}
\pgfsetbuttcap
{
\definecolor{dialinecolor}{rgb}{0.000000, 0.000000, 0.000000}
\pgfsetfillcolor{dialinecolor}
\pgfsetarrowsend{stealth}
\definecolor{dialinecolor}{rgb}{0.000000, 0.000000, 0.000000}
\pgfsetstrokecolor{dialinecolor}
\draw (11.680000\du,19.525000\du)--(24.480000\du,19.475000\du);
}
\pgfsetlinewidth{0.100000\du}
\pgfsetdash{}{0pt}
\pgfsetdash{}{0pt}
\pgfsetbuttcap
{
\definecolor{dialinecolor}{rgb}{0.000000, 0.000000, 0.000000}
\pgfsetfillcolor{dialinecolor}
\pgfsetarrowsend{stealth}
\definecolor{dialinecolor}{rgb}{0.000000, 0.000000, 0.000000}
\pgfsetstrokecolor{dialinecolor}
\draw (11.730000\du,19.525000\du)--(11.730000\du,8.875000\du);
}
\pgfsetlinewidth{0.000000\du}
\pgfsetdash{{1.000000\du}{1.000000\du}}{0\du}
\pgfsetdash{{0.200000\du}{0.200000\du}}{0\du}
\pgfsetbuttcap
{
\definecolor{dialinecolor}{rgb}{0.000000, 0.000000, 0.000000}
\pgfsetfillcolor{dialinecolor}
\definecolor{dialinecolor}{rgb}{0.000000, 0.000000, 0.000000}
\pgfsetstrokecolor{dialinecolor}
\draw (15.830000\du,8.975000\du)--(15.880000\du,19.425000\du);
}
\pgfsetlinewidth{0.000000\du}
\pgfsetdash{{0.200000\du}{0.200000\du}}{0\du}
\pgfsetdash{{0.200000\du}{0.200000\du}}{0\du}
\pgfsetbuttcap
{
\definecolor{dialinecolor}{rgb}{0.000000, 0.000000, 0.000000}
\pgfsetfillcolor{dialinecolor}
\definecolor{dialinecolor}{rgb}{0.000000, 0.000000, 0.000000}
\pgfsetstrokecolor{dialinecolor}
\draw (22.645239\du,9.015239\du)--(22.695239\du,19.465239\du);
}
\pgfsetlinewidth{0.100000\du}
\pgfsetdash{}{0pt}
\pgfsetdash{}{0pt}
\pgfsetbuttcap
{
\definecolor{dialinecolor}{rgb}{0.000000, 0.000000, 0.000000}
\pgfsetfillcolor{dialinecolor}
\definecolor{dialinecolor}{rgb}{0.000000, 0.000000, 0.000000}
\pgfsetstrokecolor{dialinecolor}
\draw (15.830000\du,13.675000\du)--(22.630000\du,11.725000\du);
}
\pgfsetlinewidth{0.100000\du}
\pgfsetdash{}{0pt}
\pgfsetdash{}{0pt}
\pgfsetbuttcap
{
\definecolor{dialinecolor}{rgb}{0.000000, 0.000000, 0.000000}
\pgfsetfillcolor{dialinecolor}
\definecolor{dialinecolor}{rgb}{0.000000, 0.000000, 0.000000}
\pgfsetstrokecolor{dialinecolor}
\pgfpathmoveto{\pgfpoint{15.930279\du}{13.574628\du}}
\pgfpatharc{217}{184}{11.103314\du and 11.103314\du}
\pgfusepath{stroke}
}
\pgfsetlinewidth{0.100000\du}
\pgfsetdash{}{0pt}
\pgfsetdash{}{0pt}
\pgfsetmiterjoin
\pgfsetbuttcap
{
\definecolor{dialinecolor}{rgb}{0.000000, 0.000000, 0.000000}
\pgfsetfillcolor{dialinecolor}
\definecolor{dialinecolor}{rgb}{0.000000, 0.000000, 1.000000}
\pgfsetstrokecolor{dialinecolor}
\pgfpathmoveto{\pgfpoint{13.660000\du}{19.375000\du}}
\pgfpathcurveto{\pgfpoint{12.900000\du}{9.300000\du}}{\pgfpoint{23.100000\du}{10.200000\du}}{\pgfpoint{22.530000\du}{11.825000\du}}
\pgfusepath{stroke}
}
\pgfsetlinewidth{0.050000\du}
\pgfsetdash{}{0pt}
\pgfsetdash{}{0pt}
\pgfsetbuttcap
{
\definecolor{dialinecolor}{rgb}{0.000000, 0.000000, 0.000000}
\pgfsetfillcolor{dialinecolor}
\pgfsetarrowsstart{to}
\pgfsetarrowsend{to}
\definecolor{dialinecolor}{rgb}{0.000000, 0.000000, 0.000000}
\pgfsetstrokecolor{dialinecolor}
\draw (13.830000\du,20.875000\du)--(15.880000\du,20.875000\du);
}
\pgfsetlinewidth{0.050000\du}
\pgfsetdash{}{0pt}
\pgfsetdash{}{0pt}
\pgfsetbuttcap
{
\definecolor{dialinecolor}{rgb}{0.000000, 0.000000, 0.000000}
\pgfsetfillcolor{dialinecolor}
\pgfsetarrowsstart{to}
\pgfsetarrowsend{to}
\definecolor{dialinecolor}{rgb}{0.000000, 0.000000, 0.000000}
\pgfsetstrokecolor{dialinecolor}
\draw (16.280000\du,20.825000\du)--(22.930000\du,20.825000\du);
}
\definecolor{dialinecolor}{rgb}{0.000000, 0.000000, 0.000000}
\pgfsetstrokecolor{dialinecolor}
\node[anchor=west] at (13.280000\du,20.275000\du){$\hat{S}_p$};
\definecolor{dialinecolor}{rgb}{0.000000, 0.000000, 0.000000}
\pgfsetstrokecolor{dialinecolor}
\node[anchor=west] at (15.830000\du,20.275000\du){$I_l$};
\definecolor{dialinecolor}{rgb}{0.000000, 0.000000, 0.000000}
\pgfsetstrokecolor{dialinecolor}
\node[anchor=west] at (22.480000\du,20.175000\du){$I_u$};
\definecolor{dialinecolor}{rgb}{0.000000, 0.000000, 0.000000}
\pgfsetstrokecolor{dialinecolor}
\node[anchor=west] at (24.880000\du,19.825000\du){$S_p$};
\definecolor{dialinecolor}{rgb}{0.000000, 0.000000, 0.000000}
\pgfsetstrokecolor{dialinecolor}
\node[anchor=west] at (10.0000\du,8.575000\du){$O_1^*$ or $O_2^*$};
\definecolor{dialinecolor}{rgb}{0.000000, 0.000000, 0.000000}
\pgfsetstrokecolor{dialinecolor}
\node[anchor=west] at (18.00000\du,14.725000\du){$O_1^*(\lambda_s^{(1)}, \beta^{(1)})$};
\pgfsetlinewidth{0.050000\du}
\pgfsetdash{}{0pt}
\pgfsetdash{}{0pt}
\pgfsetbuttcap
{
\definecolor{dialinecolor}{rgb}{0.000000, 0.000000, 0.000000}
\pgfsetfillcolor{dialinecolor}
\pgfsetarrowsend{to}
\definecolor{dialinecolor}{rgb}{0.000000, 0.000000, 0.000000}
\pgfsetstrokecolor{dialinecolor}
\draw (19.250000\du,14.100000\du)--(18.450000\du,13.250000\du);
}
\definecolor{dialinecolor}{rgb}{0.000000, 0.000000, 0.000000}
\pgfsetstrokecolor{dialinecolor}
\node[anchor=west] at (12.410000\du,13.45000\du){$O_1^*(\lambda_s^{(2)}, 0)$};
\pgfsetlinewidth{0.050000\du}
\pgfsetdash{}{0pt}
\pgfsetdash{}{0pt}
\pgfsetbuttcap
{
\definecolor{dialinecolor}{rgb}{0.000000, 0.000000, 0.000000}
\pgfsetfillcolor{dialinecolor}
\pgfsetarrowsend{to}
\definecolor{dialinecolor}{rgb}{0.000000, 0.000000, 0.000000}
\pgfsetstrokecolor{dialinecolor}
\draw (13.380000\du,14.125000\du)--(14.330000\du,15.675000\du);
}
\definecolor{dialinecolor}{rgb}{0.000000, 0.000000, 0.000000}
\pgfsetstrokecolor{dialinecolor}
\node[anchor=west] at (14.680000\du,21.675000\du){$I^-$};
\definecolor{dialinecolor}{rgb}{0.000000, 0.000000, 0.000000}
\pgfsetstrokecolor{dialinecolor}
\node[anchor=west] at (19.480000\du,21.675000\du){$I$};
\pgfsetlinewidth{0.050000\du}
\pgfsetdash{}{0pt}
\pgfsetdash{}{0pt}
\pgfsetbuttcap
{
\definecolor{dialinecolor}{rgb}{0.000000, 0.000000, 0.000000}
\pgfsetfillcolor{dialinecolor}
\pgfsetarrowsend{to}
\definecolor{dialinecolor}{rgb}{0.000000, 0.000000, 0.000000}
\pgfsetstrokecolor{dialinecolor}
\draw (17.950000\du,10.300000\du)--(17.500000\du,11.500000\du);
}
\definecolor{dialinecolor}{rgb}{0.000000, 0.000000, 0.000000}
\pgfsetstrokecolor{dialinecolor}
\node[anchor=west] at (16.0000\du,9.8000\du){$O_2^*(\lambda_s^{(4)}, \beta = \infty)$};
\end{tikzpicture}}
\caption{ Contradiction by the fact that $O_2^*(\lambda_s^i,\infty) < O_1^*(\lambda_s^f,\beta^f)$ for $S_p \in I$}\label{fig:infeasible}
  \end{minipage}
\end{figure}

In Figure \ref{fig:2cuts}, slope of $O_2^*$ is higher than that of $O_1^*$ near $\hat{S}_p$. But such a scenario is not possible by the fact that $O_2^*(\lambda_s^i,\infty) < O_1^*(\lambda_s^f,\beta^f)$ at $\hat{S}_p+\epsilon$, where $\epsilon$ is a small positive number~\cite[page 26]{Sudhir_TR}. Figure \ref{fig:1cut} has the negative slope for $O_2^*$ near $\hat{S}_p$ and this scenario is not possible as it contradicts the feasibility requirements (i.e. $S_p \geq \hat{S}_p$); and the Figure \ref{fig:infeasible} is also not possible as it contradicts the fact that optimal objective of problem P1 is more than that of  P2 for $S_p \in I$, i.e., $O_1^*(\lambda_s^f,\beta^f) > O_2^*(\lambda_s^i,\beta^i)$ (see \cite[page 26]{Sudhir_TR}).
Thus, the only possibility is Figure \ref{fig:nocontd} where the slope of $O_2^*$ is less than $O_1^*$ near $\hat{S}_p$ and hence $O_2^*(\lambda_s^i,\infty) < O_1^*(\lambda_s^f,\beta^f)$ for $S_p \in I^-$. And the theorem follows.

%

\end{proof}

We now discuss the implications of this conjecture on a finite step algorithm to compute the optimal solution for problem \textbf{P0}. 

\subsection{Validity of algorithm}\label{algo_validity}
Since the conjecture is indeed true, it validates the finite step algorithm of \cite{Sudhir_standard_style} to compute the optimal operating parameters for optimization problem \textbf{P0}.  We briefly outline the algorithm below and reproduce the same. 

The input parameters for the algorithm are $\lambda_p, \mu, \sigma, a, b, c $ and the promised QoS level for primary class customers $S_p$. The algorithm accounts for the solution of problem \textbf{P0}  based on the range of $S_p$. If the value of $S_p$ is smaller than the given threshold $\hat{S}_p$, the algorithm results into in-feasible solution in Step 2. Otherwise, it appropriately computes the optimal arrival rate for secondary class customers $(\lambda_s^*)$ and the optimal scheduling parameter ($\beta^*$) in steps 3-6. Finally, algorithm finds the optimal price ($\theta^*$) and the service level for secondary class customers ($S_s^*$) in Step 7. This algorithm is used to compute the \textit{revenue maximal} NE in a non-cooperative game between service-provider and the user-set as discussed in Section \ref{sec:sysdes}.

{\small
\begin{algorithm}                     
 \caption{Finite step algorithm for solving problem \textbf{P0}}
\label{findme}                          
\begin{algorithmic} [1]                   
\REQUIRE   $\lambda_{p}$, $\mu$, $\sigma$, $a$, $b$, $c$ and $S_{p}$ 
    \STATE Define $\psi= \left[1+\sigma^{2}\mu^{2}\right]/2$.
    
        \STATE If either $\displaystyle S_{p} \leq \hat{S}_{p} \equiv \frac{\lambda_{p} \psi}{\mu \left[\mu-\lambda_{p} \right]}$ or $\displaystyle \frac{a}{c} \leq \frac{\lambda_{p}}{\mu^{2}}\psi$, then there does not exist a feasible solution. Assign $\lambda_{s}^{*}=0$ and Stop. Else, go to the Step 3.

\STATE If $\displaystyle \frac{\mu-\lambda_{p}}{\mu \lambda_{p}} \leq \frac{ a \lambda_{p} - c \psi}{ 2 \mu \lambda_{p}^{2}+ c \psi (\mu+\lambda_{p})}$, then assign $J_{\ell}=\infty$ and go to the Step 4.
Else, find $\lambda_{s}^{(3)}$ the unique root of the cubic $\tilde{G}(\lambda_{s})$ which lies in the interval
$\left(0, \mu -\lambda_{p}\right)$ where $        \tilde{G}(\lambda_{s})$ is
        $$ 2 \mu \lambda_{s}^{3}- \left[a \mu +
        c \psi + 4 \mu^{2} \right]\lambda_{s}^{2}+ 2 \mu \left[a \mu+c
        \psi+\mu^{2} \right]\lambda_{s}- \mu \left[a \mu^{2}-c \psi
        \lambda_{p}\right].
        $$
Calculate $J_{\ell}=\frac{\psi \lambda_{3}}{[\mu-\lambda_{s}^{(3)}][\mu-\lambda_{3}]}$ and define an interval
$J=(J_{\ell}, \infty)$ where $\lambda_{3}=\lambda_{p}+\lambda_{s}^{(3)}$. If $S_{p} \in J$, then assign $\lambda_{s}^{*}=\lambda_{s}^{(3)}$, $\beta^{*}=\infty$ and directly go to Step 7. Else, go to the Step 4.    
\STATE If $\displaystyle \frac{a}{c} \leq \frac{\lambda_{p}\left(2\mu- \lambda_{p}\right)}{\mu \left(\mu
-\lambda_{p}\right)^{2}}\psi$, then define an interval  $J^{-}=(\hat{S}_{p}, J_{\ell} ]$ when $J_{\ell}$ is finite
otherwise take $J^{-}=(\hat{S}_{p}, \infty)$. Assign $ \lambda_{s}^{*} =\displaystyle \frac{1}{2 S_{p}}\left[ S_{p} \left[2 \mu - \lambda_{p} \right]+ \psi - \sqrt{\left[S_{p} \lambda_{p} + \psi \right]^{2}+4 \mu \psi
S_{p} } \right]$, $\beta^{*}=\infty$ for $S_{p} \in J^{-}$ and directly go to Step 7. Else, go to the Step 5.

\STATE Find $\displaystyle\lambda_{s}^{(1)}$, the unique root of the cubic
$G(\lambda_{s})$ in the interval $\left(0, ~\mu-\lambda_{p}
\right)$ with $\phi_{0}=\mu-\lambda_{p}$ and $G(\lambda_{s})=$
$$
 2 \mu \lambda_{s}^{3}- [c \psi + \mu (a+4
\phi_{0})] \lambda_{s}^{2}+ 2 \phi_{0}[c \psi + \mu
(a+\phi_{0})]\lambda_{s} - a \mu \phi_{0}^{2}+c \psi
\lambda_{p}(\mu + \phi_{0}).
$$
Calculate $I_{\ell}=\frac{\psi \lambda_{1}}{\mu
[\mu-\lambda_{p}]}$ and $I_{u}=\frac{\psi \lambda_{1}}{[\mu-
\lambda_{s}^{(1)}][\mu-\lambda_{1}]}$ where
$\lambda_{1}=\lambda_{p}+\lambda_{s}^{(1)}$.

\STATE Find $\displaystyle\lambda_{s}^{(1)}$, the unique root of the cubic
$G(\lambda_{s})$ in the interval $\left(0, ~\mu-\lambda_{p}
\right)$ with $\phi_{0}=\mu-\lambda_{p}$ and $G(\lambda_{s})=$
$$
 2 \mu \lambda_{s}^{3}- [c \psi + \mu (a+4
\phi_{0})] \lambda_{s}^{2}+ 2 \phi_{0}[c \psi + \mu
(a+\phi_{0})]\lambda_{s} - a \mu \phi_{0}^{2}+c \psi
\lambda_{p}(\mu + \phi_{0}).
$$
Calculate $I_{\ell}=\frac{\psi \lambda_{1}}{\mu
[\mu-\lambda_{p}]}$ and $I_{u}=\frac{\psi \lambda_{1}}{[\mu-
\lambda_{s}^{(1)}][\mu-\lambda_{1}]}$ where
$\lambda_{1}=\lambda_{p}+\lambda_{s}^{(1)}$.

\STATE Define intervals: $I^{-}=(\hat{S}_{p}, I_{\ell})$, $I
=[I_{\ell}, I_{u} )$ and $I^{+}=[I_{u}, J_{\ell}]$ when $J_{\ell}$ is finite, otherwise
take $I^{+}$ as $I^{+}=[I_{u}, \infty)$.
\begin{enumerate}[(i)]
\item  If $S_{p} \in I^{-}$, then assign
$\lambda_{s}^{*}=\frac{\mu [\mu-\lambda_{p}] S_{p}}{\psi} -
\lambda_{p}$ and $\beta^{*}=0$

\item If $S_{p} \in I$, then assign
$\lambda_{s}^{*}=\lambda_{s}^{(1)}$ and 
\begin{equation*}
\beta^{*} =\left\{ \begin{array}{ll} \displaystyle
\frac{[\mu-\lambda_{1}] [\mu S_{p} [\mu-\lambda_{p}]-\psi
\lambda_{1}] } {\psi \lambda_{1}^{2}- \mu S_{p}
\lambda_{p}[\mu-\lambda_{1}]} & \hspace{-.8cm}\qquad \mbox{for }
{\frac{\psi \lambda_{1}}{\mu [\mu-\lambda_{p}]} \leq S_{p}} \leq
\frac{\psi
\lambda_{1}}{\mu[\mu-\lambda_{1}]}\\
&\\
\displaystyle \frac{S_{p} \lambda_{s}^{(1)}[\mu-\lambda_{1}] }
{\psi \lambda_{1}- S_{p} [\mu-\lambda_{s}^{(1)}][\mu-\lambda_{1}]}
& \qquad \hspace{-1.1cm}\mbox{for} \frac{\psi \lambda_{1}}{\mu
[\mu-\lambda_{1}]} < S_{p} < \frac{\psi \lambda_{1}}{[\mu-
\lambda_{s}^{(1)}][\mu-\lambda_{1}]}
\end{array} \right.
\end{equation*}

\item If $S_{p} \in I^{+} $, then assign $\lambda_{s}^{*}=
\displaystyle \frac{1}{2 S_{p}}\left[ S_{p} \left[2 \mu -
\lambda_{p} \right]+ \psi - \sqrt{\left[S_{p} \lambda_{p} + \psi
\right]^{2}+4 \mu \psi S_{p} } \right]$ and $\beta^{*}=\infty$.
\end{enumerate}
\STATE If given problem is feasible, the optimum assured service
level to the secondary class customers is $S_{s}^{*}=W_{s}
(\lambda_{s}^{*}, \beta^{*})$ and the optimal unit price charged
to the secondary class customers is $\theta^{*}=\left[a-c
S_{s}^{*}- \lambda_{s}^{*}\right]/b$.

\end{algorithmic}
\end{algorithm}}

\subsection{Unique Solution of Problem P0} \label{UniqueP0}

We argue below that $(\lambda_s^*, \theta^*, S_s^*, \beta^*)$ is the unique solution of optimization problem \textbf{P0}. Once the optimal secondary class mean arrival rate $\lambda_{s}^{*}$ and queue discipline management parameter $\beta^{*}$ are calculated from finite step algorithm, the optimal admission price $\theta^{*}$ and assured service level to secondary class $S_{s}^{*}$ can be computed (see \cite{Sudhir_standard_style}):
$$S_s^* = W_s(\lambda_s^*,\beta^*) \text{ and }\theta^* = [a-cS^*-\lambda_s^*]/b$$
Thus, uniqueness of $\lambda_s^*$ and $\beta^*$ will suffice for unique solution of optimization problem \textbf{P0}. $\lambda_s^*$ and $\beta^*$ are given by Theorem 1, 2, 3 or 4 in \cite{Sudhir_standard_style}. Note that $\beta^*$ is either 0 or $\infty$ or a unique root of  cubic polynomials in the interval $[0, \mu - \lambda_p)$ depending on the value of service level $S_p$. And $\lambda_s^*$ is either a unique root of cubic (see Theorem 1 or 3 in \cite{Sudhir_standard_style}) or a unique expression for given service level $S_p$ (see Theorem 2 or 4 in \cite{Sudhir_standard_style}). Hence, $\lambda_s^*$ and $\beta^*$ are unique and so is the solution of problem \textbf{P0}. 
\section{{Service-provider and user-set strategic interaction}}
\label{sec:sysdes}
We first present the details of non-cooperative game between service provider and the user-set. We further discuss the existence of the continuum of NE and its computation via constrained best response dynamics. We remark that the revenue maximal NE is identified by above finite step algorithm in this continuum of NE. We also illustrate with numerical examples that both players can benefit at such revenue maximal NE by identifying suitable operational decisions, i.e., by choosing an appropriate game.

\subsection{The non-cooperative game} 

Consider a single shot two player non-cooperative game between service-provider (firm) and the user-set (market). As part of $SLA_p$ the service-provider needs to ensure that primary class customers mean waiting time should not exceed a specified level $S_p$, i.e., $W_p(\lambda_s, \beta) \leq S_p$. The service-provider also needs to ensure that the mean waiting time of secondary class customers, $W_s(\lambda_s, \beta)$ does not exceed $S_s$. Unit admission price, $\theta$, service level to secondary class customers, $S_s$, and scheduling policy across classes, $\beta$ are decided by the service-provider, player $1$. User-set, player 2, reacts according to a linear demand function ($\lambda_s \leq a-b\theta - c S_s$) for a given admission price, $\theta$, and service level, $S_s$. But offered quality of service $S_s$ depends on $\lambda_s$ as $S_s\geq W_s(\lambda_s, \beta)$. Hence, there is an interaction between the service provider and user set and we are interested in the equilibrium in this  interaction.

Strategy sets of one player depends on the other and hence they are coupled as in constraint games (See \cite{nguyen2013constraint}). Service provider's strategy $\beta$ is constrained by its dependence on strategy of secondary class customers, $\lambda_s$, (say, constraint  (\ref{Sec_Qos})). Similarly, the strategy set of the secondary class customers is constrained by strategies of the service provider, constraint (\ref{Dem}). User-set's strategy space is $R_+$ for the arrival rate $\lambda_s$.  

\subsection{Continuum of NE } \label{game}
  In above two player non-cooperative game, we argue that a continuum of NE exists and discuss its computation via a constrained best response dynamics. 
  
For the given strategy $\bar{\lambda}_s$ of player 2, the best response for player 1 can be obtained by following optimization problem $O1$. 
\begin{eqnarray}
\mbox{\textbf{O1:}\space}{\max_{\theta, S_s, \beta}~} \theta\bar{\lambda}_s
\end{eqnarray}
\begin{eqnarray}Subject~to:~~ 
W_p(\bar{\lambda}_s,\beta)\leq S_p, \\ \label{primary_wtime_constraint}
W_s(\bar{\lambda}_s,\beta)\leq S_s, \\ 
\bar{\lambda}_s \leq \mu - \lambda_p, \\\label{linear_market}
\bar{\lambda}_s \leq a-b\theta-cS_s,\\\nonumber
\theta,S_s\geq 0 ,\beta\in \mathbb{R}^+\cup \infty.
\end{eqnarray}


For the given strategy $(\bar{\theta}$, $\bar{S_s}, \bar{\beta})$ of player 1, the best response of player 2 can be obtained by the trivial optimization problem $O2$:
\begin{eqnarray}\nonumber
\mbox{\textbf{O2:}\space}{\max_{\lambda_s}~} \lambda_s 
\end{eqnarray}
 \begin{equation}\label{demand}
 Subject~to:~~ \lambda_s \leq a- b\bar{\theta} - c \bar{S}_s.
  \end{equation}
It can be argued that the above best-response dynamics converges in one step as Equation (\ref{linear_market}) and (\ref{demand}) hold with equality for either player. It can be easily seen that Equation (\ref{demand}) hold with equality for player 2 due to maximization nature of objective function. Further, Equation (\ref{linear_market}) will hold with equality as $W_p(\bar{\lambda}_s, \beta)$ is an increasing concave function of $\beta\ge 0$ and $W_s(\bar{\lambda}_s, \beta)$ is a decreasing convex function of $\beta\ge 0$. Thus, a continuum of NE exists for any feasible $\bar{\lambda}_s$ which can be easily computed using above best-response dynamics. 

 \begin{rmk}
One of the key insights of the best response dynamics approach is as follows. One can extend the model to non-linear demand functions (such as Cobb-Douglas) using the above best response dynamics. On the other hand, such a generalization would need sophisticated non-linear optimization problem solving scheme if it is approached from standard optimization methods (by formulating problems P1 and P2 as in Section \ref{model_des}).  
\end{rmk}
 
 \subsection{Revenue maximal NE} \label{revenue_max_NE}
 
In above continuum of NE, it is desirable to find the  revenue maximal NE for a given game. Following result characterizes the revenue maximal NE and it is indeed given by the finite step algorithm of Section \ref{algo_validity}.

\begin{thm}
$(\lambda_s^*, \theta^*, S_s^*, \beta^*)$ is revenue maximal NE for above non-cooperative two player game. 
\end{thm}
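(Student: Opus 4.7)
The plan is to verify the two Nash equilibrium conditions for $(\lambda_s^*, \theta^*, S_s^*, \beta^*)$ by directly inspecting the best-response problems O1 and O2, and then to bound the revenue of an arbitrary NE by the optimum of \textbf{P0}.

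First I would check that player 2's best response to $(\theta^*, S_s^*, \beta^*)$ is indeed $\lambda_s^*$. Problem O2 is linear with the single binding constraint $\lambda_s \leq a - b\bar\theta - c\bar S_s$, so its unique maximizer is $\lambda_s = a - b\theta^* - c S_s^*$. From Step 7 of the finite step algorithm we have $\theta^* = (a - c S_s^* - \lambda_s^*)/b$, which rearranges to exactly $\lambda_s^* = a - b\theta^* - c S_s^*$, so player 2's best response coincides with $\lambda_s^*$.

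Next I would check that $(\theta^*, S_s^*, \beta^*)$ solves O1 with $\bar\lambda_s = \lambda_s^*$. Since $\lambda_s^*\ge 0$, maximizing $\theta\lambda_s^*$ is equivalent to maximizing $\theta$. The demand constraint forces $\theta \le (a - \lambda_s^* - cS_s)/b$, and the waiting-time constraint forces $S_s \ge W_s(\lambda_s^*,\beta)$; therefore O1 reduces to
\begin{equation*}
\max_{\beta}\ \frac{a - \lambda_s^* - c\, W_s(\lambda_s^*,\beta)}{b}\quad\text{subject to}\quad W_p(\lambda_s^*,\beta)\le S_p,\ \beta\in\mathbb{R}^+\cup\{\infty\}.
\end{equation*}
This is exactly the ``inner'' problem obtained from \textbf{P0} by fixing $\lambda_s=\lambda_s^*$. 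Because $(\lambda_s^*,\beta^*)$ is the (unique) optimizer of \textbf{P0} (established in Section \ref{UniqueP0}), $\beta^*$ is optimal for this inner problem, so $(\theta^*,S_s^*,\beta^*)$ solves O1. Combined with the previous paragraph, this establishes that $(\lambda_s^*,\theta^*,S_s^*,\beta^*)$ is a NE.

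Finally, for revenue maximality, let $(\tilde\lambda_s,\tilde\theta,\tilde S_s,\tilde\beta)$ be any NE of the game. Both best-response conditions force the demand constraint and the waiting-time constraints to be tight, so this tuple satisfies every constraint of \textbf{P0}, i.e., it is feasible for \textbf{P0}. By optimality of $(\lambda_s^*,\theta^*,S_s^*,\beta^*)$ for \textbf{P0} we obtain $\tilde\theta\tilde\lambda_s \le \theta^*\lambda_s^*$, which yields revenue maximality. The only subtlety to be careful about is the treatment of $\beta=\infty$: since \textbf{P0} admits $\beta\in\mathbb{R}^+\cup\{\infty\}$, no NE is excluded from the feasible region of \textbf{P0}, and the comparison is valid throughout. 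I do not expect a serious obstacle; the argument is essentially a reduction of the two best-response problems to \textbf{P0} together with the uniqueness result of Section \ref{UniqueP0}.
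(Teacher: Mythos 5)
Your proof is correct and follows essentially the same route as the paper: identify O1 with $\bar\lambda_s=\lambda_s^*$ as problem \textbf{P0} with $\lambda_s$ fixed, and invoke the optimality and uniqueness of the \textbf{P0} solution from Section \ref{UniqueP0}. You are in fact more complete than the paper's own proof, which only verifies player 1's best response and leaves implicit both the player-2 check (via $\theta^*=(a-cS_s^*-\lambda_s^*)/b$) and the argument that any other NE is feasible for \textbf{P0} and hence earns no more revenue.
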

\begin{proof}
The optimization problem O1 is same as optimization problem P0 with fixed $\lambda_s^*$ and hence its feasible space is a subset of that of P0. And $(\lambda_s^*, \theta^*, S_s^*, \beta^*)$ is a feasible point for problem O1. Thus, $(\theta^*, S_s^*, \beta^*)$   is the optimal solution for problem O1 as we know that optimal solution of P0 is unique from section \ref{UniqueP0}. Hence, $(\theta^*, S_s^*, \beta^*)$ is the best response of player 1 for player 2's strategy  $\lambda_s^*$. The optimal scheduling variable $\beta^*$ could be such that $\beta^* < \infty$ or $\beta^* = \infty$ as determined by the comparison  of optimization problems P1 and P2. 
\end{proof}

Above result follows from the constraint best response optimization problems O1 and O2. 

\subsection{Pricing and revenue management}
Revenue management deals with offering different prices based on customer segments. The most common examples are pricing ``perishable products" and capacity management in airline tickets, high-tech products such as computers and cell phones, high fashion apparel, hospitality industry, fruits and vegetables (see \cite{talluri2006theory}). We present a revenue management scheme via revenue maximal NE in above strategic interaction between service-provider and user-set, in the context of potentially nonperishable items.

We illustrate the computation of revenue maximal NE with a synthetic data set via finite step algorithm for different games. We show that both the service-provider and user-set can benefit at a different revenue maximal NE by playing a different game with marginal inconvenience to primary class of customers.
\begin{table}[h]
\centering{
\begin{tabular}{|l|l|l|l|l|l|}
\hline
\textbf{$S_p$} & \textbf{Priority $\beta^*$} & \textbf{Arrival rate $\lambda_s^*$} & \textbf{Price $\theta^*$} & \textbf{Assured SL $S_s^*$} & \textbf{Revenue $O^*$} \\ \hline
0.29 & 0 & 0.1775 & 1196.4 & 0.5977 & 212.36 \\ \hline
0.35 & 0 & 1.4556 & 1182.7 & 0.9242 & 1721.54 \\ \hline
0.45 & 0 & 3.5858 & 1157.4 & 2.23 & 4150.20 \\ \hline
0.75 & 0.011 & 5.6655 & 1085 & 19.432 & 6147.07 \\ \hline
1 & 0.025 & 5.6655 & 1085 & 19.1672 & 6147.07 \\ \hline
8 & 0.6715 & 5.6655 & 1108.1 & 11.754 & 6277.94 \\ \hline
9.823 & 1 & 5.6655 & 1113.9 & 9.8233 & 6310.80 \\ \hline
10 & 1.0389 & 5.6655 & 1144 & 9.6359 & 6481.33 \\ \hline
12 & 1.624 & 5.6655 & 1208 & 7.5178 & 6843.92 \\ \hline
19 & $\infty$ & 5.6719 & 1141.7 & 0.5195 & 6475.61 \\ \hline
23 & $\infty$ & 5.72 & 1141.2 & 0.5264 & 6527.66 \\ \hline
32 & $\infty$ & 5.799 & 1140.4 & 0.5359 & 6613.18 \\ \hline
\end{tabular}}
\caption{Parameter setting: $a = 120, b =0.1, c = 0.3$, $\lambda_p = 6$, $\mu = 12$, $\sigma = 0.2$.  }
\label{my-label}
\end{table}
For the model parameters as given in Table 1, we compute revenue maximal NE for different primary class customers threshold $S_p$. We have an interesting observation: By marginally increasing the mean waiting time of the primary class customers $S_p$ from $9.823$ to $10$, the service-provider can quote a shorter mean waiting of $9.6359$ as opposed to $9.823$ for an increase in price form $1113.9$ to $1144$. This was possible by giving higher dynamic priority to them as $\beta^*$ is increased to $1.0389$ from equal priority $1$. This new operating point is also revenue maximal NE (for a game with new $S_p$) and results in extra revenue to service-provider as well as shorter mean waiting times to secondary class customers. 

\begin{rmk}
Another managerial insight from this works is that there is no need to offer service to secondary customers or offer in very low volumes when $S_p$ is low. Additionally, for the middle range of service level, the model gives pure dynamic priority scheduling parameter ($0<\beta<\infty$) which should be optimal. 
\end{rmk}
\section{Discussion}\label{discussion}
  
We proposed a queue based game model to capture the strategic interaction between waiting time sensitive user-set and revenue oriented service-provider. We show that a continuum of NE exists and the revenue maximal NE in this continuum can be easily computed via a finite step algorithm, which is now valid as we settled the associated conjecture for the algorithm. Several interesting game theoretic issues are under investigation such as impact of non-linear demand function in strategic interaction, intervention or mechanism design for the market so that it reaches the revenue maximal NE.


}

\bibliographystyle{plain}
\bibliography{EwoRefer}

\begin{thebibliography}{10}

\bibitem{baijmol1972economic}
William~J Baijmol.
\newblock {\em Economic theory and operations analysis}.
\newblock Englewood Cliffs, New Jersey: Prentice-Hall, Inc, 1972.

\bibitem{bertsekas}
Dimitri~P. Bertsekas.
\newblock {\em Nonlinear Programming}.
\newblock Athena Scientific, 1999.

\bibitem{burnetas2013customer}
Apostolos~N Burnetas.
\newblock Customer equilibrium and optimal strategies in {M}arkovian queues in
  series.
\newblock {\em Annals of Operations Research}, 208(1):515--529, 2013.

\bibitem{coffman}
E.~G. Coffman and I.~Mitrani.
\newblock A characterization of waiting time performance realizable by single
  server queues.
\newblock {\em Operations Research}, 28:810 -- 821, 1979.

\bibitem{demand1}
Sanjeev Dewan and Haim Mendelson.
\newblock User delay costs and internal pricing for a service facility.
\newblock {\em Management Science}, 36(12):1502--1517, 1990.

\bibitem{doncel2014resource}
J~Doncel, U~Ayesta, O~Brun, and B~Prabhu.
\newblock A resource-sharing game with relative priorities.
\newblock {\em Performance Evaluation}, 79:287--305, 2014.

\bibitem{doroudi2011dispatching}
Sherwin Doroudi, Ragavendran Gopalakrishnan, and Adam Wierman.
\newblock Dispatching to incentivize fast service in multi-server queues.
\newblock {\em ACM SIGMETRICS Performance Evaluation Review}, 39(3):43--45,
  2011.

\bibitem{federgruen}
A.~Federgruen and H.~Groenevelt.
\newblock {M/G/c} queueing systems with multiple customer classes:
  Characterization and control of achievable performance under nonpreemptive
  priority rules.
\newblock {\em Management Science}, 9:1121-- 1138, 1988.

\bibitem{manu_vtool}
Manu~K Gupta, N.~Hemachandra, and J.~Venkateswaran.
\newblock On mean waiting time completeness and equivalence of edd and hol-pj
  dynamic priority in 2-class {M/G/1} queue.
\newblock In {\em 8th International Conference on Performance Evaluation
  Methodologies and Tools (Valuetools)}. ACM Digital Library, 2014.

\bibitem{gupta2017optimal}
Manu~K Gupta, N~Hemachandra, and J~Venkateswaran.
\newblock Optimal revenue management in two class pre-emptive delay dependent
  {M}arkovian queues.
\newblock {\em Applied Mathematical Modelling}, 45:31--54, 2017.

\bibitem{hall2009static}
Joseph~M Hall, Praveen~K Kopalle, and David~F Pyke.
\newblock Static and dynamic pricing of excess capacity in a make-to-order
  environment.
\newblock {\em Production and Operations Management}, 18(4):411--425, 2009.

\bibitem{hassin16}
Rafael Hassin.
\newblock {\em Rational Queueing}.
\newblock Chapman $\&$ Hall/ CRC Series in Operations Research, CRC Press,
  2016.

\bibitem{hassin2002nash}
Refael Hassin and Moshe Haviv.
\newblock Nash equilibrium and subgame perfection in observable queues.
\newblock {\em Annals of Operations Research}, 113(1-4):15--26, 2002.

\bibitem{hassin2003queue}
Refael Hassin and Moshe Haviv.
\newblock {\em To queue or not to queue: Equilibrium behavior in queueing
  systems}, volume~59.
\newblock Springer Science \& Business Media, 2003.

\bibitem{haviv2009strategic}
Moshe Haviv.
\newblock Strategic customer behavior in a single server queue.
\newblock {\em Wiley Encyclopedia of Operations Research and Management
  Science, ~}, 2009.
\newblock Editor: Saul I. Gass.

\bibitem{haviv2016strategic}
Moshe Haviv and Liron Ravner.
\newblock Strategic bidding in an accumulating priority queue: equilibrium
  analysis.
\newblock {\em Annals of Operations Research}, 244(2):505--523, 2016.

\bibitem{Kleinrock1964}
Leonard Kleinrock.
\newblock A delay dependent queue discipline.
\newblock {\em Naval Research Logistics Quarterly}, 11:329--341,
  September-December 1964.

\bibitem{li2016multi}
Na~Li and David~A Stanford.
\newblock Multi-server accumulating priority queues with heterogeneous servers.
\newblock {\em European Journal of Operational Research}, 252(3):866--878,
  2016.

\bibitem{li2017nonlinear}
Na~Li, David~A Stanford, Peter Taylor, and Ilze Ziedins.
\newblock Nonlinear accumulating priority queues with equivalent linear
  proxies.
\newblock {\em Operations Research}, 65(6):1712--1721, 2017.

\bibitem{demand2}
Haim Mendelson and Seungjin Whang.
\newblock Optimal incentive-compatible priority pricing for the {M/M/1} queue.
\newblock {\em Operations Research}, 38(5):870--883, 1990.

\bibitem{complete}
I.~Mitrani and J.H. Hine.
\newblock Complete parametrized families of job scheduling strategies.
\newblock {\em Acta Informatica}, 8:61-- 73, 1977.

\bibitem{Naor}
P~Naor.
\newblock Regulation of queue size by levying tolls.
\newblock {\em Econometrica}, 37(1):15--24, January 1969.

\bibitem{nguyen2013constraint}
Thi-Van-Anh Nguyen, Arnaud Lallouet, and Lucas Bordeaux.
\newblock Constraint games: framework and local search solver.
\newblock In {\em Tools with Artificial Intelligence (ICTAI), IEEE 25th
  International Conference on}, pages 963--970. IEEE, 2013.

\bibitem{Users}
{NREL}.
\newblock {About NREL system}.
\newblock \url{https://hpc.nrel.gov/about}, 2015.
\newblock {Accessed}: 2018-10-18.

\bibitem{Queues}
{NREL}.
\newblock {Queues, Scheduling Policies, and Job Priorities}.
\newblock
  \url{https://hpc.nrel.gov/users/systems/peregrine/running-jobs/new-queue-structure},
  2016.
\newblock {Accessed}: 2018-10-18.

\bibitem{Sudhir_standard_style}
S.~K. Sinha, N.~Rangaraj, and N.~Hemachandra.
\newblock Pricing surplus server capacity for mean waiting time sensitive
  customers.
\newblock {\em European Journal of Operational Research}, 205:159--171, August
  2010.

\bibitem{Sudhir_TR}
Sudhir~K. Sinha, N.~Rangaraj, and N.~Hemachandra.
\newblock Pricing surplus server capacity for mean waiting time sensitive
  customers.
\newblock Technical report, IIT Bombay, 2008.

\bibitem{stanford2014waiting}
David~A Stanford, Peter Taylor, and Ilze Ziedins.
\newblock Waiting time distributions in the accumulating priority queue.
\newblock {\em Queueing Systems}, 77(3):297--330, 2014.

\bibitem{talluri2006theory}
Kalyan~T Talluri and Garrett~J Van~Ryzin.
\newblock {\em The theory and practice of revenue management}, volume~68.
\newblock Springer Science \& Business Media, 2006.

\bibitem{tandra2004diffserv}
Rahul Tandra, N~Hemachandra, and D~Manjunath.
\newblock Diffserv node with join minimum cost queue policy and multiclass
  traffic.
\newblock {\em Performance Evaluation}, 55(1):69--91, 2004.

\bibitem{operators}
{US mobile networks}.
\newblock {About US mobile operators}.
\newblock \url{https://en.wikipedia.org/wiki/List of United States mobile
  virtual network operators}, 2017.
\newblock {Accessed}: 2018-10-18.

\end{thebibliography}

\end{document}